\renewcommand{\a}{\alpha}
\newcommand{\s}{\sigma}
\renewcommand{\o}{\omega}
\renewcommand{\P}{\mathbb{P}}
\newcommand{\SF}{{\mathcal{F}}}
\newcommand{\SH}{{\mathcal{H}}}
\newcommand{\Z}{\mathbb{Z}}
\newcommand{\C}{\mathbb{C}}
\renewcommand{\H}{\mathbb{H}}
\newcommand{\N}{\mathbb{N}}
\newcommand{\R}{\mathbb{R}}
\renewcommand{\S}{\mathbb{S}}
\newcommand{\T}{\mathbb{T}}
\newcommand{\Cont}{\operatorname{Cont}}
\newcommand{\SCont}{\operatorname{SCont}}
\newcommand{\Ham}{\operatorname{Ham}}
\newcommand{\CSp}{\operatorname{CSp}}
\newcommand{\Diff}{\operatorname{Diff}}
\DeclareMathOperator{\cmfd}{(\mathnormal{M}, \xi=\mathrm{ker}\,\alpha)}
\DeclareMathOperator{\lieg}{\mathfrak{g}}
\DeclareMathOperator{\curv}{\mathnormal{K}^{\sigma}}
\DeclareMathOperator{\chwup}{\overline{\chi}_{\mathnormal{r}}}
\DeclareMathOperator{\xis}{\xi_{\mathrm{st}}}
\DeclareMathOperator{\als}{\alpha_{\mathrm{st}}}
\DeclareMathOperator{\tfix}{\mathbb{T}_{\mathnormal{p}}}
\DeclareMathOperator{\ev}{\mathit{ev}}
\DeclareMathOperator{\stfl}{\mathrm{V}_{2}(\mathbb{R}^{3})}
\newtheorem{proposition}{Proposition}
\newtheorem{theorem}[proposition]{Theorem}
\newtheorem{lemma}[proposition]{Lemma}
\newtheorem{corollary}[proposition]{Corollary}
\theoremstyle{definition}
\newtheorem{definition}[proposition]{Definition}
\newtheorem{example}[proposition]{Example}
\newtheorem{remark}[proposition]{Remark}
\newtheorem{remark*}{Remark}
\begin{document}

\title{Chern--Weil theory and the group of strict contactomorphisms}
\subjclass[2010]{Primary: 53D10, 57S05, Secondary: 55P15, 55R40}
\keywords{standard contact sphere, strict contactomorphism, Reeb flow, Chern--Weil theory, homotopy type}

\author{Roger Casals}
\address{Instituto de Ciencias Matem\'aticas CSIC--UAM--UCM--UC3M,
C. Nicol\'as Cabrera, 13--15, 28049, Madrid, Spain.}
\email{casals.roger@icmat.es}

\author{Old\v{r}ich Sp\'{a}\v{c}il}
\address{Department of Mathematics, University College London, Gower Street, WC1E 6BT, London, UK \emph{and} Institute of Pure and Applied Mathematics, University of Aberdeen, Scotland, UK.}
\email{o.spacil@ucl.ac.uk}

\begin{abstract}
In this paper we study the groups of contactomorphisms of a closed contact manifold from a topological viewpoint. First we construct examples of contact forms on spheres whose Reeb flow has a dense orbit. Then we show that the unitary group $\mathrm{U}(n+1)$ is homotopically essential in the group of contactomorphisms of the standard contact sphere $\S^{2n+1}$ and present a proof of the homotopy equivalence $\Cont(\S^{3}, \xis) \simeq \mathrm{U}(2)$. In the second part of the paper we focus on the group of strict contactomorphisms -- using the framework of Chern--Weil theory we introduce and study contact characteristic classes analogous to the Reznikov Hamiltonian classes in symplectic topology. We carry out several explicit calculations illustrating the non-triviality of these contact characteristic classes.
\end{abstract}
\maketitle


\section{Introduction}\label{sec:intro}
The study of symplectic and contact topology essentially began with V.\,I.\,Arnol'd in \cite{Ar}. The work of many researchers has established contact topology as a well-founded theory with meaningful results \cite{Be,E2,G1,Gr,Ho}. There have also been many significant applications to other areas of mathematics such as low-dimensional topology \cite{KM,OS}.\\

The problem of existence of contact structures on an almost contact manifold has been solved \cite{BEM,CPP,Et,Lu,Ma} and there have also been partial advances in the classification \cite{E1,G2,H1,H2}. However the two groups of transformations associated to a contact form, the strict contactomorphism group and the contactomorphism group, are not well understood. This article contains relevant results and examples concerning such groups and provides tools for a more systematic study of the group of strict contact transformations.\\

Our main example in the first part of the paper is the standard contact sphere -- we recall its definition in Section \ref{sec:preliminaries} and then study the contact manifold in Sections \ref{sec:hi} and \ref{sec:dyn}. First, using simple algebraic topology tools we prove the following:
\begin{theorem}\label{thm:sph1}
The inclusion $i\colon \mathrm{U}(n+1)\longrightarrow\Cont(\S^{2n+1},\xis)$ induces an injection on the homotopy groups
$\pi_{*}i\colon \pi_{*} \mathrm{U}(n+1)\longrightarrow\pi_{*}\Cont(\S^{2n+1},\xis)$.
\end{theorem}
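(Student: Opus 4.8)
The plan is to detect the homotopy of $\mathrm{U}(n+1)$ by differentiating contactomorphisms at a single reference point and landing in the frame bundle of the contact distribution $\xi=\ker\als$, which I will identify with $\mathrm{U}(n+1)$ itself. Concretely, I will build a map $D$ out of $\Cont(\S^{2n+1},\xis)$ whose restriction along $i$ is a homotopy equivalence onto a space homotopy equivalent to $\mathrm{U}(n+1)$; this exhibits a one-sided homotopy inverse to $i$ and hence forces $\pi_*i$ to be split injective.

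First I fix the basepoint $p=(1,0,\dots,0)\in\S^{2n+1}\subset\C^{n+1}$ together with a reference unitary frame $u_0$ of the fibre $\xi_p\iso\C^n$. For $\phi\in\Cont(\S^{2n+1},\xis)$ write $\phi^*\als=h_\phi\,\als$ with $h_\phi>0$; then $\phi$ preserves $\xi$ and $(\phi^* d\als)|_\xi=h_\phi\,(d\als)|_\xi$, so the differential $d\phi_p$ restricts to a conformally symplectic isomorphism $\xi_p\to\xi_{\phi(p)}$. Recording the image of the reference frame yields a continuous evaluation-type map $D\colon\Cont(\S^{2n+1},\xis)\to\Fr^{\CSp}(\xi)$, $\phi\mapsto d\phi_p(u_0)$, into the bundle of conformally symplectic frames of $\xi$ over $\S^{2n+1}$. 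Since the structure group $\CSp(2n,\R)\simeq\R_{>0}\x\Sp(2n,\R)$ deformation retracts onto its maximal compact $\mathrm{U}(n)$, the bundle $\Fr^{\CSp}(\xi)$ retracts onto the unitary frame bundle $\Fr^{\mathrm{U}}(\xi)$.

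The key computation is to identify this unitary frame bundle. Because $\mathrm{U}(n+1)$ acts transitively on $\S^{2n+1}$ by contactomorphisms with stabiliser $\{1\}\oplus\mathrm{U}(n)$ acting on $\xi_p=\{0\}\oplus\C^n$ as the standard $\mathrm{U}(n)$, the distribution is the associated homogeneous bundle $\xi\iso\mathrm{U}(n+1)\x_{\mathrm{U}(n)}\C^n$, whence its unitary frame bundle is $\mathrm{U}(n+1)\x_{\mathrm{U}(n)}\mathrm{U}(n)\iso\mathrm{U}(n+1)$. Under this identification the composite
$$\mathrm{U}(n+1)\xrightarrow{\ i\ }\Cont(\S^{2n+1},\xis)\xrightarrow{\ D\ }\Fr^{\CSp}(\xi)\longrightarrow\Fr^{\mathrm{U}}(\xi)\iso\mathrm{U}(n+1)$$
is exactly $A\mapsto dA_p(u_0)$, the tautological diffeomorphism; in particular it is a homotopy equivalence. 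Hence $i$ admits a left homotopy inverse and $\pi_*i$ is injective.

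The genuinely substantive point — and the step I expect to be the main obstacle — is the identification $\Fr^{\mathrm{U}}(\xi)\iso\mathrm{U}(n+1)$, that is, recognising the contact distribution of the standard sphere as the homogeneous bundle $\mathrm{U}(n+1)\x_{\mathrm{U}(n)}\C^n$, since this is precisely what guarantees that the differentiated evaluation map sweeps out every homotopy class of $\mathrm{U}(n+1)$. The remaining verifications are routine: continuity of $D$, and compatibility of the frame-bundle retraction with the inclusion $\mathrm{U}(n)\inj\CSp(2n,\R)$.
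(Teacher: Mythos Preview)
Your proposal is correct and follows essentially the same strategy as the paper: both define an evaluation map from $\Cont(\S^{2n+1},\xis)$ to the conformal symplectic frame bundle of $\xis$ and show that its precomposition with $i$ is a weak homotopy equivalence, forcing $\pi_*i$ to be injective. The only cosmetic difference is that the paper establishes this weak equivalence via the five lemma applied to the map of fibrations $\bigl(\mathrm{U}(n)\to\mathrm{U}(n+1)\to\S^{2n+1}\bigr)\to\bigl(\CSp(2n,\R)\to\Fr\xis\to\S^{2n+1}\bigr)$, whereas you phrase the same content as the identification $\Fr^{\mathrm{U}}(\xi)\cong\mathrm{U}(n+1)$ after retracting the structure group; these are equivalent packagings of the same argument, and your ``main obstacle'' is in fact the routine observation that the unitary frame bundle of the homogeneous bundle $\mathrm{U}(n+1)\times_{\mathrm{U}(n)}\C^n$ is $\mathrm{U}(n+1)$ itself.
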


As a consequence, the group of contactomorphisms of the standard sphere is not homotopically trivial. It is of interest to compare the group of contact transformations with that of strict contact transformations: the choice of a contact form introduces a dynamical flavour to the contact topology framework making the situation more rigid. For example we have the following result together with the corollary below:
\begin{theorem}\label{thm:sph2}
The standard contact form $\als\in\Omega^1(\S^{2n+1})$ can be $C^1$--perturbed to a contact form $\alpha$ such that $\xis=\ker\alpha$ and the strict contact manifold $(\S^{2n+1},\ker\alpha)$ admits a dense Reeb orbit.
\end{theorem}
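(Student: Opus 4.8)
The plan is to reduce the construction to a purely discrete-time problem about volume-preserving disk maps, by exploiting a global surface of section for the Reeb flow. Since we must keep $\xis=\ker\alpha$, I would only vary the contact form inside its conformal class, writing $\alpha=f\als$ with $f>0$ and $f$ taken $C^1$-close to $1$; a direct computation then shows that the Reeb field of $\alpha=f\als$ equals $\frac1f\bigl(R_{\als}+X\bigr)$, where $X\in\xis$ is the $d\als|_{\xis}$-Hamiltonian vector field of $1/f$. Thus $R_\alpha$ is a reparametrisation of the Hopf field $R_{\als}$ together with a small Hamiltonian push inside the contact planes, so a $C^1$-small choice of $f$ yields a Reeb field $C^0$-close to $R_{\als}$. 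Crucially, reparametrisation by a positive function leaves the orbit point-sets unchanged, so it suffices to make the unparametrised oriented foliation transitive.

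Next I would use the open book decomposition of $(\S^{2n+1},\xis)$ with page the ball $D^{2n}$ and binding the standard $\S^{2n-1}$, obtained from $(z_1,\dots,z_{n+1})\mapsto z_{n+1}/|z_{n+1}|$; the Hopf flow is transverse to the pages away from the binding and has $D^{2n}$ as a global surface of section with return map the identity. The key ingredient is a realisation statement: via the Thurston–Winkelnkemper/Giroux construction one can produce, for any volume- (resp.\ symplectically-) preserving diffeomorphism $\phi$ of the page that is the identity near the binding, a contact form whose Reeb flow has $\phi$ as its first-return map, and this form supports $\xis$; applying Gray stability then moves the kernel back to $\xis$ exactly and lands one inside the family $f\als$ after a (small) conjugation. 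Since a flow has a dense orbit as soon as its return map to a global section is topologically transitive — the interior of the mapping torus being dense in $\S^{2n+1}$ — the theorem reduces to exhibiting a volume-preserving $\phi$ of $D^{2n}$, $C^1$-close to the identity and trivial near the boundary, that is topologically transitive.

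For the last point I would invoke the Anosov–Katok conjugation method on $D^{2n}$ with its rotation circle action: for every $\e>0$ it produces volume-preserving (indeed symplectic) diffeomorphisms that are ergodic, hence have a dense orbit, and lie $C^\infty$-within $\e$ of a rotation $R_\theta$ by a Liouville angle $\theta$; choosing $\theta$ small makes $\phi$ as $C^1$-close to the identity as required. I expect the main obstacle to be the realisation step rather than the dynamics. One must build the supporting contact form with quantitative $C^1$ control, so that a monodromy $C^1$-close to the identity forces $\|\alpha-\als\|_{C^1}$ small, and then pass to a form with kernel exactly $\xis$ without destroying either the transitivity (which is preserved under the Gray-stability conjugation) or the smallness; the delicate bookkeeping occurs near the binding, where the flow degenerates onto the periodic binding orbit. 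For $n=1$ everything is most transparent — the page is a disk and $\phi$ is an ergodic area-preserving disk map $C^1$-close to a small rotation — so I would settle that case in full before treating general $n$.
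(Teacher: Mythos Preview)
Your open-book approach is genuinely different from the paper's, and it contains a real gap that lies in the dynamics, not only in the realisation step you flag. A self-map $\phi$ of $D^{2n}$ that is the identity on a nonempty open collar of $\partial D^{2n}$ can never be topologically transitive: any forward orbit that meets the collar becomes eventually constant, hence is a finite set, while an orbit that avoids the collar is confined to the complement of an open set. Thus the two conditions you impose on $\phi$ --- ``trivial near the boundary'' (needed for the Thurston--Winkelnkemper/Giroux gluing) and ``topologically transitive'' --- are mutually exclusive. The Anosov--Katok diffeomorphisms you invoke are $C^\infty$-close to a rotation $R_\theta$ and are precisely \emph{not} the identity near $\partial D^{2n}$; cutting them off to satisfy the boundary condition creates an invariant collar and destroys the dense orbit. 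So the reduction ``dense Reeb orbit $\Leftrightarrow$ transitive compactly supported page map'' fails before one ever reaches the delicate quantitative realisation near the binding.

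The paper sidesteps this boundary trap by applying the Katok machinery one level up, in the ambient symplectic vector space rather than on a page. One views $(\S^{2n+1},\als)$ as a regular level of the quadratic Hamiltonian $H(z)=\|z\|^2$ on $(\C^{n+1}\setminus\{0\},\omega_{\mathrm{st}})$, where the Reeb flow of $\als$ agrees with the characteristic (Hamiltonian) flow on the level set. The standard Hamiltonian torus action then feeds directly into Katok's theorem on ergodic $C^\infty$-small perturbations of degenerate integrable systems: one obtains a $C^\infty$-close Hamiltonian $\mathcal H$ whose flow is ergodic (with respect to Liouville measure) on each energy hypersurface. A level of $\mathcal H$ is still a smooth sphere, the restricted Liouville form is a contact form whose Reeb flow is that ergodic Hamiltonian flow, and Gray stability identifies the underlying contact structure with $\xis$. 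Because the hypersurface is closed, no boundary compatibility ever enters. In short, your instinct to use an Anosov--Katok type construction is exactly right; the fix is to perturb the defining Hamiltonian of the contact hypersurface rather than the monodromy of an open book.
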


The techniques employed to conclude Theorem \ref{thm:sph2} belong to the classical theory of dynamical systems and allow further conclusions than the one just stated. They can be, in particular, applied to other contact manifolds, though we only focus on the standard contact spheres $(\S^{2n+1},\xis)$ as an illustrative case. We shall also provide an alternative argument in the case of $\S^3$. Theorems \ref{thm:sph1} and \ref{thm:sph2} imply:

\begin{corollary}\label{cor:strict}
There exist contact forms $\alpha_0$ and $\alpha_1$ defining the standard contact structure on $\S^{2n+1}$ such that the map on the homotopy groups induced by the inclusions
$$j_0\colon \SCont(\S^{2n+1},\alpha_0)\longrightarrow\Cont(\S^{2n+1},\xis)$$
$$j_1\colon \SCont(\S^{2n+1},\alpha_1)\longrightarrow\Cont(\S^{2n+1},\xis)$$
is trivial for $j_{0}$ while the image of $\pi_*j_1$ contains $\pi_{*}\mathrm{U}(n+1)$.
\end{corollary}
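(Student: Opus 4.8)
The plan is to exhibit the two forms explicitly and treat the two assertions separately. For $\alpha_1$ I would simply take the standard contact form $\als$. The unitary group $\mathrm{U}(n+1)$ acts on $\S^{2n+1}\subset\C^{n+1}$ preserving $\als$ on the nose, so that $\mathrm{U}(n+1)\subseteq\SCont(\S^{2n+1},\als)$ and the inclusion $i$ of Theorem \ref{thm:sph1} factors as $\mathrm{U}(n+1)\hookrightarrow\SCont(\S^{2n+1},\als)\xrightarrow{\,j_1\,}\Cont(\S^{2n+1},\xis)$. Applying $\pi_*$ gives $\pi_* i=\pi_* j_1\circ\pi_*\iota$ for the evident inclusion $\iota$, so the image of $\pi_* j_1$ contains the image of $\pi_* i$; since $\pi_* i$ is injective by Theorem \ref{thm:sph1}, this image is a faithful copy of $\pi_*\mathrm{U}(n+1)$. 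This settles the statement about $j_1$ and is the easy half.

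For $\alpha_0$ I would take the perturbed form $\alpha$ produced by Theorem \ref{thm:sph2}, whose Reeb flow $\psi_t$ has a dense orbit through some point $p$; note that this perturbation necessarily breaks the $\mathrm{U}(n+1)$--symmetry, since a dense orbit is incompatible with the periodic Hopf flow. The strategy is to show that the dense--orbit hypothesis forces $\SCont(\S^{2n+1},\alpha_0)$ to be so rigid that $\pi_* j_0$ vanishes. The first step is that every $\phi\in\SCont(\S^{2n+1},\alpha_0)$ preserves $\alpha_0$ and hence its Reeb vector field, so $\phi$ commutes with the entire Reeb flow, $\phi\circ\psi_t=\psi_t\circ\phi$, and the flow is central. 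The second step exploits density: from $\phi(\psi_t(p))=\psi_t(\phi(p))$ the single value $\phi(p)$ determines $\phi$ on the whole orbit of $p$, and by continuity on all of $\S^{2n+1}$. Hence the evaluation map $\ev_p\colon\SCont(\S^{2n+1},\alpha_0)\to\S^{2n+1}$, $\phi\mapsto\phi(p)$, is injective and realizes the strict contactomorphism group as a subspace of the sphere.

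The final step, and the main obstacle, is to upgrade this set--theoretic rigidity into a statement about homotopy type. I expect to identify the identity component of $\SCont(\S^{2n+1},\alpha_0)$ with the Reeb flow $\{\psi_t:t\in\R\}$ itself: a path $\phi_s$ from the identity descends under $\ev_p$ to a path $q_s=\phi_s(p)$ starting at $p$, and the commutation relation should confine $q_s$ to the single dense Reeb orbit, an injectively immersed copy of $\R$. Since $\R$ is contractible, the identity component would then be weakly contractible, whence $\pi_k\SCont(\S^{2n+1},\alpha_0)=0$ for $k\ge 1$ and $\pi_* j_0$ is trivial. The delicate point is precisely the control of $q_s$: one must argue, using the minimality coming from Theorem \ref{thm:sph2} and the fact that each $\phi_s$ is a homeomorphism permuting Reeb orbits, that $q_s$ cannot leave the orbit of $p$. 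This is the step most in need of careful dynamical justification, and it is where the construction of Theorem \ref{thm:sph2} does the real work.
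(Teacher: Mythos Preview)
Your treatment of $\alpha_1=\als$ is correct and is exactly what the paper does: factor the inclusion $i$ of Theorem~\ref{thm:sph1} through $\SCont(\S^{2n+1},\als)$ and read off that the image of $\pi_*j_1$ contains $\pi_*\mathrm{U}(n+1)$.

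For $\alpha_0$ your route diverges from the paper and has a genuine gap at precisely the point you flag. The paper does not argue via the evaluation map; instead it invokes Proposition~\ref{prop:erg}, whose proof uses contact Hamiltonians. Any element of the identity component of $\SCont(\S^{2n+1},\alpha_0)$ is the time--one map of a strictly contact isotopy, and the generating (time--dependent) Hamiltonian $H(t,-)$ is invariant under the Reeb flow of $\alpha_0$. A Reeb--invariant function is constant along the dense orbit, hence constant on all of $\S^{2n+1}$; so the generating vector field is at each time a scalar multiple of the Reeb field. This immediately identifies the identity component with $\{\psi_t:t\in\R\}\cong\R$, giving $\pi_k\SCont(\S^{2n+1},\alpha_0)=0$ for $k\ge1$ and the triviality of $\pi_*j_0$.

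Your argument, by contrast, tries to deduce the same identification from the commutation relation $\phi\circ\psi_t=\psi_t\circ\phi$ alone. That relation only says that $\phi_s$ sends Reeb orbits to Reeb orbits; it does not force $\phi_s(p)$ to lie on the orbit of $p$. Theorem~\ref{thm:sph2} produces a form with \emph{one} dense Reeb orbit (indeed, via ergodicity, many), not a minimal flow, so there can be other dense orbits as well as non--dense ones. There is no purely topological reason why the continuous path $q_s=\phi_s(p)$ could not leave the orbit of $p$ and land on another orbit --- the orbit is only an injectively immersed line, not a closed submanifold, and the leaf space is highly non--Hausdorff. In short, the ``delicate point'' you identify is not a technicality but the whole content of the argument, and I do not see how to close it without the Hamiltonian input. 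Replacing your third step by the Hamiltonian argument of Proposition~\ref{prop:erg} fixes the proof with no further work.
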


Furthermore we include a couple of explicit computations of the homotopy type of groups of contact transformations for low-dimensional spheres. In Subsection \ref{ssec:5SPH} we show that the standard unitary action induces homotopy equivalences $\SCont(\S^3,\als)\simeq \mathrm{U}(2)$ and $\SCont(\S^5,\als)\simeq \mathrm{U}(3)$. 
Then in Section \ref{sec:3SPH} we explain in detail how to use \cite{E2,Ha} to deduce the following folklore statement:\footnote{This result is known to a number of experts in the field, but it seems that a proof has not yet appeared in the literature.}
\begin{theorem}\label{thm:3SPH}
Consider the standard contact 3--sphere $(\S^3,\xis)$. Then
\begin{itemize}
\item[1.] the homotopy type of the space of positive tight contact structures on $\mathbb{S}^{3}$ is that of $\S^2$ and
\item[2.] the action of $\mathrm{U}(2)$ on $(\S^{3}, \xis)$ induces a homotopy equivalence $i\colon \mathrm{U}(2)\longrightarrow\Cont(\mathbb{S}^{3}, \xis)$.
\end{itemize}
\end{theorem}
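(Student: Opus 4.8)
The plan is to deduce both items from a single fibration coming from the action of the diffeomorphism group on the space of contact structures, using Eliashberg's classification \cite{E2} to control the contact-geometric side and Hatcher's solution of the Smale conjecture \cite{Ha} to control the diffeomorphism side. First I would let $\mathcal{C}$ denote the space of positive tight contact structures on $\S^3$, with its natural topology, and let $\Diff^+(\S^3)$ act on $\mathcal{C}$ by pushforward. Eliashberg's uniqueness theorem asserts that every positive tight contact structure on $\S^3$ is isotopic to $\xis$; combined with Gray stability this makes the action transitive, so $\mathcal{C}$ is a single orbit. The stabiliser of $\xis$ is by definition $\Cont(\S^3,\xis)$, and a Gray-type argument furnishes local slices for the orbit map, so that
\[
\Cont(\S^3,\xis)\longrightarrow\Diff^+(\S^3)\longrightarrow\mathcal{C}
\]
is a locally trivial fibration.

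Next I would bring in the finite-dimensional picture. The round metric realises $\mathrm{SO}(4)\subset\Diff^+(\S^3)$, and a short linear-algebra computation shows that an orthogonal transformation preserves the cooriented field of complex tangencies $\xis$ precisely when it commutes with the ambient complex structure, i.e. lies in $\mathrm{U}(2)$; the conjugate-linear isometries preserve the underlying plane field but send $\xis$ to its negative, so they do not stabilise the point $\xis\in\mathcal{C}$. Hence the $\mathrm{SO}(4)$--orbit of $\xis$ is the homogeneous space $\mathrm{SO}(4)/\mathrm{U}(2)\cong\S^2$ of orientation-compatible orthogonal complex structures, and the inclusions give a map of fibrations from $\mathrm{U}(2)\longrightarrow\mathrm{SO}(4)\longrightarrow\S^2$ into the fibration above.

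The crux, which is exactly item~1, is to show that the inclusion $\S^2=\mathrm{SO}(4)/\mathrm{U}(2)\hookrightarrow\mathcal{C}$ of the round contact structures into all tight ones is a weak homotopy equivalence. This is the parametric refinement of Eliashberg's theorem: one must upgrade uniqueness-up-to-isotopy to the statement that an arbitrary family of positive tight contact structures on $\S^3$ can be deformed, continuously in its parameters and through tight structures, onto a family of complex-tangency structures, so that $\mathcal{C}$ deformation retracts onto this $\S^2$. This is the step I expect to be the main obstacle, since it requires running Eliashberg's disc-filling normalisation in families while never leaving the tight class; the effect on $\pi_0$ is precisely his uniqueness theorem, and the genuine work lies in the higher homotopy.

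Granting item~1, item~2 is formal. By \cite{Ha} the inclusion $\mathrm{SO}(4)\hookrightarrow\Diff^+(\S^3)$ is a homotopy equivalence, and item~1 makes $\S^2\hookrightarrow\mathcal{C}$ one as well; comparing the two long exact sequences of homotopy groups of the two fibrations and invoking the five lemma, the remaining inclusion $\mathrm{U}(2)\hookrightarrow\Cont(\S^3,\xis)$ — which is exactly the map $i$ induced by the standard unitary action — induces isomorphisms on all homotopy groups. Since both spaces have the homotopy type of CW complexes, this weak equivalence is a genuine homotopy equivalence, which is item~2. A little care with basepoints and with the nonabelian tail of the sequences is needed, but the standard comparison applies; alternatively one could establish item~2 first, by the same five-lemma comparison fed by an independent computation of $\Cont(\S^3,\xis)$, and then read item~1 off the fibration.
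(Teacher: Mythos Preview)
Your architecture for item~2 matches the paper's exactly: the fibration $\Cont(\S^3,\xis)\to\Diff^+(\S^3)\to\mathbb{T}$ (your $\mathcal{C}$), compared via the five lemma with $\mathrm{U}(2)\to\mathrm{SO}(4)\to\S^2$, using Hatcher on the middle terms and item~1 on the base. So item~2 is handled identically once item~1 is in place.

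The gap is in item~1. You correctly identify the target statement---the inclusion $\S^2=\mathrm{SO}(4)/\mathrm{U}(2)\hookrightarrow\mathcal{C}$ is a weak equivalence---but your proposed route, a parametric version of Eliashberg's disc-filling argument run in families, is not actually carried out, and you flag it yourself as ``the main obstacle.'' The paper sidesteps this entirely. Rather than retracting $\mathcal{C}$ onto the round $\S^2$ inside it, the paper builds a \emph{different} fibration whose base is $\S^2$: evaluate a contact structure at a fixed point $p$, sending $\xi\mapsto\mathbf{n}_\xi(p)\in\S^2$ (the cooriented plane $\xi(p)\subset T_p\S^3$). Local trivialisations come from acting by block-diagonal rotations $\mathrm{diag}(1,s(b))\in\mathrm{SO}(4)$ over local sections $s$ of $\mathrm{SO}(3)\to\S^2$. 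The typical fibre is $\mathbb{T}_p=\{\xi\in\mathbb{T}:\xi(p)=\xis(p)\}$, and Eliashberg \cite[Theorem~2.4.2]{E2} has \emph{already proved} that $\mathbb{T}_p$ is contractible. So item~1 falls out of the long exact sequence with no new parametric contact geometry required.

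In short: you are missing the evaluation-at-a-point fibration and the specific Eliashberg input (contractibility of $\mathbb{T}_p$) that makes item~1 a black-box citation rather than a project. Once you replace your ``parametric refinement'' step by this fibration, the rest of your argument goes through as written.
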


The two statements are essentially equivalent since for a given contact manifold $(M, \xi)$ the space of contact structures on $M$ and the group $\Cont(M, \xi)$ are related by a locally trivial fibration induced by the Gray stability property (as explained by P.~Massot, the local triviality follows from the Cerf criterion). Little is in fact known about the topology of the space of contact structures, see \cite{CP,GG,Bo}. As a corollary of the above we also obtain the following interesting result.

\begin{corollary}\label{cor:3SPH}
The inclusion $k\colon \SCont(\S^3, \als) \longrightarrow \Cont(\S^3, \xis)$ is a homotopy equivalence.
\end{corollary}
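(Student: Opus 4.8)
The plan is to deduce the corollary as a formal consequence of the two homotopy equivalences already in hand, by observing that all three groups are linked through a single geometric $\mathrm{U}(2)$-action. First I would record that the standard linear action of $\mathrm{U}(2)$ on $\C^2$ restricts to an action on $\S^3$ that preserves not merely the contact structure $\xis=\ker\als$ but the contact form $\als$ itself: since $\als$ is the restriction to $\S^3$ of a $1$-form assembled from the Euclidean metric and the complex structure of $\C^2$, and $\mathrm{U}(2)$ preserves both of these, every element of $\mathrm{U}(2)$ is a \emph{strict} contactomorphism. Consequently the action factors through a map $\ell\colon \mathrm{U}(2)\to\SCont(\S^3,\als)$, and by construction the composite $k\circ\ell$ equals, on the nose, the inclusion $i\colon\mathrm{U}(2)\to\Cont(\S^3,\xis)$ appearing in Theorem \ref{thm:3SPH}.

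Next I would feed in the two inputs. By the computation in Subsection \ref{ssec:5SPH} the map $\ell$ is a homotopy equivalence, while by part 2 of Theorem \ref{thm:3SPH} the map $i=k\circ\ell$ is a homotopy equivalence. Applying the two-out-of-three property to the commutative triangle $k\circ\ell=i$ then forces $k$ to be a homotopy equivalence: concretely, fixing a homotopy inverse $\ell'$ of $\ell$, one has $k\simeq k\circ\ell\circ\ell'=i\circ\ell'$, which is a composite of homotopy equivalences and hence itself one.

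The whole substance of the corollary therefore sits in the two prior results, and there is no genuine analytic or topological obstacle at this final step. The only point meriting (minor) care is verifying that it is literally the \emph{same} unitary action that underlies both equivalences, so that the triangle commutes strictly rather than merely up to homotopy; this is immediate once one notes that the equivalence $\SCont(\S^3,\als)\simeq\mathrm{U}(2)$ of Subsection \ref{ssec:5SPH} and the equivalence $i$ of Theorem \ref{thm:3SPH} are both defined by the standard $\mathrm{U}(2)$-action on $(\S^3,\xis)$, with $\ell$ and $i$ differing only by the inclusion $k$.
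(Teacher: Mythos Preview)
Your argument is correct and follows essentially the same route as the paper: factor the inclusion $i\colon\mathrm{U}(2)\to\Cont(\S^3,\xis)$ through $\SCont(\S^3,\als)$ via the map you call $\ell$ (the paper calls it $j$), invoke Subsection~\ref{ssec:5SPH} for $\ell$ and Theorem~\ref{thm:3SPH} for $i$, and conclude for $k$. The only cosmetic difference is that the paper phrases the last step as ``$k$ induces isomorphisms on homotopy groups, hence is a weak equivalence, hence a genuine one by Whitehead and the CW type of these groups,'' whereas you use the two-out-of-three property for homotopy equivalences directly; your version is in fact slightly cleaner since it bypasses the appeal to CW type.
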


Similar results can be proven for specific contact manifolds, particularly 3--folds. However, the corresponding results require ad hoc arguments and there is a priori no methodical approach to deduce properties of the group of (strict) contact transformations. This naturally leads to the second part of this article.\\

Sections \ref{sec:chernweil} and \ref{sec:CWCont} deal with a systematic procedure to detect non-trivial cohomology classes for the classifying space of the group of strict contactomorphisms. This is achieved by means of Chern--Weil theory, a differential geometric technique to construct characteristic classes for principal $G$--bundles with $G$ a Lie group. Classically this has been done for finite dimensional Lie groups but the framework extends to the case of infinite dimensional Fr\'echet Lie groups or even convenient Lie groups (in the sense of the convenient calculus of \cite{KrMi}). The Chern--Weil method is based on the existence of invariant polynomials on the Lie algebra $\lieg=$ Lie($G$) but these might not necessarily exist for an arbitrary Lie group $G$. In particular it seems hard to find invariant polynomials for various groups of diffeomorphisms -- see \cite{Ro} for a recent survey.\\

An exceptional case of a diffeomorphism group where a series of invariant polynomials is known to exist is the Hamiltonian group $\Ham(B, \omega)$ of a closed symplectic manifold $(B, \omega)$ -- the reader is referred to \cite{Re} for a heuristic treatment of Chern--Weil theory in this case. We will see in Section \ref{sec:chernweil} that an analogous series of invariant polynomials exists on the Lie algebra of the strict contactomorphism group $\SCont(M, \alpha)$ of a closed contact manifold $(M, \alpha)$. Thus the machinery of Chern--Weil theory can be applied to the group of strict contact transformations producing characteristic classes called the (contact) Reznikov classes. Using these we can conclude for instance the following: 

\begin{theorem}\label{thm:reg_chlgy}
Let $\cmfd$ be a closed connected contact manifold such that the Reeb flow of $\a$ generates a free $\S^{1}$--action and consider the homomorphism $\varphi\colon \S^{1} \longrightarrow \SCont(M, \alpha)$ given by this circle action. Then the induced map between the cohomology of the classifying spaces $$(B\varphi)^{*}\colon H^{*}(B\mathrm{SCont}(M, \alpha); \mathbb{R}) \longrightarrow H^{*}(B\mathbb{S}^{1}; \mathbb{R}) \cong \mathbb{R}[\mathnormal{e}]$$ is surjective.
\end{theorem}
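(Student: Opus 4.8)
The plan is to exhibit a single Chern--Weil (contact Reznikov) class in $H^2(B\SCont(M,\a);\R)$ whose pullback along $B\varphi$ is a nonzero multiple of the degree-two generator $e\in H^2(B\S^1;\R)$. Since $H^*(B\S^1;\R)\cong\R[e]$ is generated as a ring by $e$ and $(B\varphi)^*$ is a ring homomorphism, exhibiting one such class forces $(B\varphi)^*$ to be surjective.

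To build the class I would use the identification, set up in Section \ref{sec:chernweil}, of the Lie algebra $\lieg=\mathrm{Lie}(\SCont(M,\a))$ with the Reeb-invariant smooth functions on $M$ via the contact Hamiltonian $X\mapsto\a(X)$; under this correspondence the Reeb field $R$ is the constant function $1$ and the adjoint action of $\psi$ is $H\mapsto H\circ\psi^{-1}$. The decisive point, in contrast with Reznikov's symplectic setting where the linear term is killed by normalising Hamiltonians to zero mean, is that here the \emph{linear} functional
$$P_1(H)=\int_M H\,\a\wedge(d\a)^n,\qquad \dim M=2n+1,$$
is already a nonzero $\mathrm{Ad}$-invariant polynomial of degree one: because $\psi^*\a=\a$ preserves the volume form $\a\wedge(d\a)^n$, the change of variables $\int_M(H\circ\psi^{-1})\,\a\wedge(d\a)^n=\int_M H\,\psi^*(\a\wedge(d\a)^n)=P_1(H)$ gives invariance. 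The Chern--Weil construction of Section \ref{sec:chernweil} then yields a class $c_1\in H^2(B\SCont(M,\a);\R)$.

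Next I would apply naturality of the Chern--Weil homomorphism with respect to $\varphi$: the pullback $(B\varphi)^*c_1$ is the class on $B\S^1$ determined by the invariant polynomial $P_1\circ d\varphi$ on $\mathrm{Lie}(\S^1)\cong\R$. As $d\varphi$ sends the generator to a nonzero multiple of $R$, that is to a nonzero constant Hamiltonian, $P_1\circ d\varphi$ is a nonzero multiple of the linear polynomial $t\mapsto t\int_M\a\wedge(d\a)^n$. The contact volume $\int_M\a\wedge(d\a)^n$ is strictly positive, so $(B\varphi)^*c_1$ is a nonzero multiple of $e$, and surjectivity follows immediately. Here the freeness of the $\S^1$-action is used only to guarantee that $\varphi$ is a genuine effective homomorphism with $d\varphi$ hitting $R$.

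The remaining computation is thus essentially a single line, and I expect the main obstacle to lie not in this statement but in the foundations it rests on: rigorously establishing Chern--Weil theory, and especially its naturality under Lie group homomorphisms, for the infinite-dimensional convenient Lie group $\SCont(M,\a)$, and verifying that the invariant polynomial $P_1$ genuinely integrates to a well-defined real cohomology class on $B\SCont(M,\a)$. Granting the framework of Section \ref{sec:chernweil}, the only specifically contact-geometric inputs are the identification of the Reeb generator with the constant Hamiltonian $1$ and the positivity of the contact volume.
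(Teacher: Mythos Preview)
Your proposal is correct and follows essentially the same approach as the paper: both compute the pullback of the contact Reznikov class along $\widehat{\varphi}$ by observing that $\widehat{\varphi}(t)$ is the constant Hamiltonian $t$, so that the invariant polynomial $I_1$ (your $P_1$) pulls back to $t\cdot\mathrm{vol}_\alpha(M)$, a nonzero multiple of the generator. The only difference is cosmetic---the paper carries out the analogous computation for every $I_k$, obtaining $(B\varphi)^*\chi_k=\mathrm{vol}_\alpha(M)\cdot e^k$ directly, whereas you check only $k=1$ and invoke the ring structure of $(B\varphi)^*$ to conclude surjectivity; your version is slightly more economical but the underlying argument is identical.
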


The preimage of $\mathnormal{e}$ under the above surjection can be described explicitly, which allows us to deduce a simple corollary:

\begin{corollary}\label{cor:reg_htpy}
Let $\cmfd$ be a closed connected contact manifold such that the Reeb flow of $\a$ generates a free $\S^{1}$--action. Then the homotopy class of this flow in $\pi_{1}(\SCont(M, \alpha))$ is of infinite order.
\end{corollary}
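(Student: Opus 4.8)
The plan is to deduce the corollary directly from Theorem \ref{thm:reg_chlgy} by pairing the class that surjects onto $H^*(B\S^1;\R)$ against the homotopy class of the flow. Write $G=\SCont(M,\alpha)$ and let $[\varphi]\in\pi_1(G)$ be the homotopy class of the Reeb flow, namely the class of the loop $\varphi\colon\S^1\to G$ furnished by the circle action. Since $\pi_1$ of a topological group is abelian, it suffices to exhibit a group homomorphism $\pi_1(G)\to(\R,+)$ that does not vanish on $[\varphi]$: the image of any finite-order element under a homomorphism into a torsion-free group must be zero, so non-vanishing forces $[\varphi]$ to have infinite order.

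First I would pass to classifying spaces through the natural isomorphism $\pi_1(G)\cong\pi_2(BG)$, under which $[\varphi]$ corresponds to $(B\varphi)_*(g)$, where $g$ generates $\pi_2(B\S^1)\cong\Z$. Composing the Hurewicz homomorphism $\pi_2(BG)\to H_2(BG;\Z)\to H_2(BG;\R)$ with the Kronecker pairing against a fixed class $c\in H^2(BG;\R)$ yields an additive functional $\beta\mapsto\langle c,\beta\rangle$ on $\pi_2(BG)$, and hence, transported back, the desired homomorphism on $\pi_1(G)$. The class $c$ is then selected using Theorem \ref{thm:reg_chlgy}: the surjectivity of $(B\varphi)^*$ provides a Reznikov class $c\in H^2(BG;\R)$ with $(B\varphi)^*c=e$, the degree-two generator of $H^*(B\S^1;\R)\cong\R[e]$. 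By naturality of the Kronecker pairing one computes
$$\langle c,[\varphi]\rangle=\langle c,(B\varphi)_*g\rangle=\langle(B\varphi)^*c,\,g\rangle=\langle e,g\rangle.$$
Since $B\S^1\simeq\CP^\infty$, the Hurewicz map is an isomorphism in degree two and the Euler class pairs with the generator to $\pm 1\neq 0$, so $\langle c,[\varphi]\rangle\neq 0$ and the evaluation homomorphism is nontrivial on $[\varphi]$, completing the argument.

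The step that demands the most care is precisely the passage from the cohomological statement of Theorem \ref{thm:reg_chlgy} to an order statement in $\pi_1$, that is, the assertion that the evaluation map is genuinely additive and kills torsion. Concretely, if $[\varphi]$ had order $N$, then its Hurewicz image would be $N$-torsion in $H_2(BG;\Z)$ and would therefore vanish in $H_2(BG;\R)$ by universal coefficients over a field, so the functional would be forced to zero on $[\varphi]$, contradicting the computation above. Everything else is naturality and the standard fact that the Euler class evaluates nontrivially on the fundamental homotopy class of $\CP^\infty$. I expect no real difficulty beyond this bookkeeping, but it is the conceptual heart that converts the explicit preimage of $e$ into the infinite order of the Reeb class.
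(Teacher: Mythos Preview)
Your argument is correct and follows essentially the same route as the paper's proof: both pass to $\pi_2(BG)$ via the loop space/classifying space correspondence, push forward through the Hurewicz map into real homology, and then pair against the degree-two Reznikov class whose image under $(B\varphi)^*$ is (a nonzero multiple of) the Euler class, invoking naturality of the Kronecker pairing to reduce to the standard computation on $B\S^1$. The only cosmetic differences are that the paper restricts to the identity component $\SCont_0(M,\alpha)$ in order to invoke the Hurewicz \emph{isomorphism} $\pi_2(B\mathcal{G})\otimes\R\cong H_2(B\mathcal{G};\R)$, whereas you only use the Hurewicz \emph{homomorphism} and the torsion-freeness of $\R$, which makes the passage to the identity component unnecessary; and the paper verifies $\chi_1(k\cdot[\mathrm{Reeb}])=\mathrm{const}\cdot k\neq 0$ for each $k$ directly, while you package this as ``a homomorphism to $\R$ is nonzero on $[\varphi]$, hence $[\varphi]$ has infinite order''.
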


In Section \ref{sec:CWCont} we discuss several other results similar to Theorem \ref{thm:reg_chlgy} such as the following:

\begin{proposition}\label{prop:evenRez}
Let $(M, \xi)$ be a closed connected contact manifold with a non-trivial action of a connected compact Lie group $G$ by contactomorphisms. Then for any $G$--invariant contact form $\alpha$ and $l\in \mathbb{N}$ the Reznikov class $\chi_{2l} \in H^{4l}(B\mathrm{Cont}(M, \alpha); \mathbb{R})$ is non-trivial. 
\end{proposition}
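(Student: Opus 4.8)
The plan is to pull $\chi_{2l}$ back along the map of classifying spaces induced by the $G$-action and to exploit the positivity forced by the even exponent. First I would produce a $G$-invariant contact form: starting from any $\alpha_0$ with $\ker\alpha_0=\xi$, the average $\alpha=\int_G g^{*}\alpha_0\,dg$ over the compact group $G$ is $G$-invariant, and since the connected group $G$ acts by co-orientation-preserving contactomorphisms each $g^{*}\alpha_0$ is a positive multiple of $\alpha_0$, so $\alpha$ is again a contact form with $\ker\alpha=\xi$. The $G$-action then preserves $\alpha$, giving a homomorphism $\varphi\colon G\longrightarrow\SCont(M,\alpha)$ and a map $B\varphi\colon BG\longrightarrow B\SCont(M,\alpha)$. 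It suffices to prove that $(B\varphi)^{*}\chi_{2l}\neq0$ in $H^{4l}(BG;\R)$.

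Next I would identify this pullback through the naturality of Chern--Weil theory. The differential $d\varphi\colon\lieg\longrightarrow\mathrm{Lie}(\SCont(M,\alpha))$ is the comoment map $\zeta\longmapsto H_\zeta:=\alpha(\zeta_M)$, where $\zeta_M$ is the fundamental vector field of $\zeta$. Recalling from Section \ref{sec:chernweil} that $\chi_r$ is (up to normalisation) the Chern--Weil class of the invariant polynomial $H\longmapsto\int_M H^{r}\,\alpha\wedge(d\alpha)^{n}$ on $\mathrm{Lie}(\SCont(M,\alpha))$, functoriality of the Chern--Weil homomorphism identifies $(B\varphi)^{*}\chi_r$ with the class on $BG$ of the precomposed polynomial
$$\widetilde P_r(\zeta)=\int_M H_\zeta^{\,r}\,\alpha\wedge(d\alpha)^{n},\qquad \zeta\in\lieg.$$
I would check that $\widetilde P_r$ is genuinely $\mathrm{Ad}_G$-invariant, which holds because strict contactomorphisms preserve the volume form $\alpha\wedge(d\alpha)^{n}$ and act on Hamiltonians by $H\mapsto H\circ\psi^{-1}$.

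The crux is to show that $\widetilde P_{2l}$ is a non-zero invariant polynomial, for then its non-vanishing in cohomology follows from the Chern--Weil isomorphism $(\mathrm{Sym}\,\lieg^{*})^{G}\cong H^{*}(BG;\R)$ valid for compact $G$. Here the even exponent is decisive: $H_\zeta^{2l}\ge 0$ pointwise, so $\widetilde P_{2l}(\zeta)\ge0$ with equality if and only if $H_\zeta\equiv0$. Since a contact vector field is uniquely determined by its contact Hamiltonian, $H_\zeta\equiv0$ is equivalent to $\zeta_M\equiv0$. As $G$ is connected, a non-trivial action forces $\zeta_M\not\equiv0$ for some $\zeta\in\lieg$ (otherwise the subgroup generated by $\exp\lieg$, namely all of $G$, would act trivially); for such $\zeta$ one gets $\widetilde P_{2l}(\zeta)>0$. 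Hence $\widetilde P_{2l}\neq0$, so $(B\varphi)^{*}\chi_{2l}\neq0$ and a fortiori $\chi_{2l}\neq0$.

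The step I expect to require the most care is the Chern--Weil naturality identification in the present setting: $\SCont(M,\alpha)$ is an infinite-dimensional Fr\'echet (or convenient) Lie group, so the functoriality of the Chern--Weil map under $\varphi$ and the precise normalisation of the invariant polynomial defining $\chi_r$ must be handled within that framework rather than quoted from the finite-dimensional theory. Once the comoment map and the polynomial $\widetilde P_r$ are set up exactly as above, the remaining positivity argument is elementary; the entire weight of the proposition rests on correctly transporting the classical Chern--Weil formalism to the strict contactomorphism group and on the simple but essential fact that an even moment $\int_M H_\zeta^{2l}\,\alpha\wedge(d\alpha)^n$ vanishes only for the trivial infinitesimal action.
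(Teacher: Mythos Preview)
Your proposal is correct and follows essentially the same route as the paper: pull back $\chi_{2l}$ along $B\varphi$, identify $(B\varphi)^*\chi_{2l}$ via Chern--Weil naturality with the class of the invariant polynomial $\zeta\mapsto\int_M(\alpha(\zeta_M))^{2l}\,\alpha\wedge(d\alpha)^n$ on $\lieg$, use the even exponent to force positivity whenever $\zeta_M\not\equiv 0$, and invoke Cartan's theorem for compact $G$. Your opening paragraph on averaging to produce a $G$--invariant form is superfluous, since the statement already hypothesises such an $\alpha$; and the infinite-dimensional Chern--Weil naturality you flag as delicate is precisely what the paper sets up in the discussion preceding the proof.
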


The previous can be improved for a special class of contact manifolds known as $K$--contact manifolds:

\begin{proposition}\label{prop:oddRez} Let $\cmfd$ be a closed connected $K$--contact manifold with a $K$--contact form $\alpha$. Then all the Reznikov classes $\chi_{k} \in H^{2k}(B\mathrm{Cont}(M, \alpha); \mathbb{R}), k\in \N$, are non-trivial.
\end{proposition}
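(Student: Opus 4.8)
The plan is to pull the classes $\chi_k$ back to the classifying space of a compact torus of strict contactomorphisms containing the Reeb flow, where each $\chi_k$ becomes a genuine polynomial that we can evaluate in the Reeb direction. Since $\alpha$ is a $K$--contact form, there is a compatible metric $g$ for which the Reeb field $R$ is Killing. As $M$ is closed, $\mathrm{Isom}(M,g)$ is a compact Lie group, so the closure $\mathbb{T}$ of the one--parameter subgroup generated by $R$ is a torus. Because $\mathcal{L}_R\alpha=0$ and the condition of preserving $\alpha$ is closed, every element of $\mathbb{T}$ is a strict contactomorphism, giving a homomorphism $\jmath\colon\mathbb{T}\hookrightarrow\SCont(M,\alpha)$ whose Lie algebra contains the infinitesimal generator $\zeta_R$ of the Reeb flow. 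The role of the $K$--contact hypothesis is exactly this: it makes the closure of the Reeb flow finite--dimensional and compact, which is what produces cohomology on $B\mathbb{T}$ to detect.

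Next I would invoke the naturality of the Chern--Weil construction of Section \ref{sec:chernweil} along $\jmath$. The differential $\mathrm{Lie}(\jmath)$ sends $\zeta\in\mathrm{Lie}(\mathbb{T})$ to the contact Hamiltonian $\mu_\zeta=\alpha(X_\zeta)$ of the vector field $X_\zeta$ it induces on $M$, and $\zeta\mapsto X_\zeta$ is linear, so $\mu_\zeta$ is linear in $\zeta$. Hence $(B\jmath)^*\chi_k$ is the Chern--Weil class of the restricted invariant polynomial $\zeta\mapsto P_k(\mu_\zeta)$, where $P_k$ is the invariant polynomial defining $\chi_k$, which on a basic function $f$ is a non-zero multiple of $\int_M f^{k}\,\alpha\wedge(d\alpha)^n$. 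Since $\mathbb{T}$ is a finite--dimensional compact group, classical Chern--Weil theory identifies $H^{2k}(B\mathbb{T};\R)$ with the homogeneous degree--$k$ polynomials on $\mathrm{Lie}(\mathbb{T})$, and under this identification $(B\jmath)^*\chi_k$ becomes, up to a non-zero constant, the polynomial $\zeta\mapsto\int_M\mu_\zeta^{\,k}\,\alpha\wedge(d\alpha)^n$. It therefore suffices to show that this polynomial does not vanish identically.

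Here the Reeb direction settles every $k$ at once. The vector field induced by $\zeta_R$ is $R$, whose contact Hamiltonian is the constant $\mu_{\zeta_R}=\alpha(R)\equiv 1$, so
\[
\int_M \mu_{\zeta_R}^{\,k}\,\alpha\wedge(d\alpha)^n=\int_M \alpha\wedge(d\alpha)^n=\operatorname{vol}(M)\neq 0 .
\]
Thus the restricted polynomial is non-zero, whence $(B\jmath)^*\chi_k\neq 0$ and $\chi_k\neq 0$ for every $k\in\N$. This is precisely how the $K$--contact case sharpens Proposition \ref{prop:evenRez}: there one uses the positivity $\int_M\mu_\zeta^{\,2l}\,\alpha\wedge(d\alpha)^n\ge 0$, which only detects even powers, whereas the constant Hamiltonian $1$ integrates to a positive number in every power, even and odd.

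The step requiring the most care is the second one: the functoriality of the infinite--dimensional Chern--Weil construction under $\jmath\colon\mathbb{T}\hookrightarrow\SCont(M,\alpha)$, and specifically that restricting $\chi_k$ along $\jmath$ yields the ordinary finite--dimensional Chern--Weil class of the restricted polynomial on the torus. One must verify that the connection and invariant polynomial defining $\chi_k$ restrict compatibly to the finite--dimensional subgroup, so that the two Chern--Weil homomorphisms sit in a square commuting with $(B\jmath)^*$; granting the framework of Section \ref{sec:chernweil}, this is the standard naturality of Chern--Weil under a Lie group homomorphism. The remaining inputs---that $K$--contactness produces the torus and that $\alpha(R)\equiv1$---are immediate.
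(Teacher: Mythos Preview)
Your argument is correct and is essentially the paper's own proof, made explicit. The paper merely says that Proposition~\ref{prop:evenRez} together with Proposition~\ref{prop:KCont} yield the result: Lerman's characterization produces a torus $T\subset\SCont(M,\alpha)$ and a direction $A\in\mathrm{Lie}\,T$ with strictly positive moment $m(-,A)>0$, so $\int_M m(-,A)^k\,\alpha\wedge(d\alpha)^n>0$ for every $k$. You unpack this by constructing the torus directly as the closure of the Reeb flow in $\mathrm{Isom}(M,g)$ and take $A$ to be the Reeb direction itself, for which $m(-,A)\equiv 1$; this is exactly the $A$ implicit in Lerman's proof of Proposition~\ref{prop:KCont}, so the two arguments coincide.
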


This article is organized as follows. Section \ref{sec:preliminaries} contains the basic definitions and examples needed for the paper. In Section \ref{sec:hi} we give a proof of Theorem \ref{thm:sph1}. Section \ref{sec:dyn} deals with the group of strict contactomorphisms of the standard contact spheres and in particular we explain Theorem \ref{thm:sph2} and Corollary \ref{cor:strict}. Section \ref{sec:3SPH} is devoted to the proof of Theorem \ref{thm:3SPH}, i.e.~the description of the homotopy type of the group of contactomorphisms for the standard contact 3--sphere. In Section \ref{sec:chernweil} we recall the framework of Chern--Weil theory and shortly discuss the infinite dimensional setting. The last Section \ref{sec:CWCont} details the results obtained in the case of the group of strict contact transformations.\\

{\bf Acknowledgements.} We are grateful to R.\,Hepworth, P.\,Massot, M.\,McLean, S.\,M\"{u}ller, F.\,Presas, E.\,Shelukhin and C.\,Wendl for useful comments and suggestions. In particular, after posting a draft of our paper on the arXiv we were informed by S.\,M\"{u}ller that he together with P.\,Spaeth proved a result similar to Theorem \ref{thm:sph2} for some unit cotangent bundles, please see \cite[Proposition 2.1]{MuSp}.\\

This work stems from a discussion in January 2013 during the VII Workshop on Symplectic Geometry, Contact Geometry and Interactions in Les Diablerets, Switzerland, and the first author would like to acknowledge the second author for triggering this initial discussion. \\

The paper contains some results from the second author's PhD thesis \cite{Sp}. On this occasion he would like to thank J.\,K\k{e}dra for his patient supervision and support. He would also like to thank M.\,Guest and Y.\,Maeda for inviting him to the UK--Japan Mathematical Forum, Tokyo, February 2013 and giving him the opportunity to present his results there for the first time. \\

The present work is part of the authors activities within CAST, a Research Network Program of the European Science Foundation. The first author is supported by the Spanish National Research Project MTM2010--17389. The second author's work is funded by a Leverhulme Trust Research Project Grant.


\section{Preliminaries}\label{sec:preliminaries}

In this section we give the basic definitions and discuss several examples in preparation for the rest of the paper.

\subsection{Definitions}\label{ssec:definitions}
A \emph{contact structure} on a smooth $(2n+1)$--dimensional manifold $M$ is a completely non-integrable tangent hyperplane field $\xi$. The complete non-integrability of $\xi$ can be expressed by $\alpha\wedge d\alpha^n\neq 0$ (pointwise everywhere) for a 1--form $\alpha$ locally defining $\xi$, i.e.~$\xi=\ker\alpha$. Such an $\a$ is called a \emph{contact form}.\\

Note that the contact condition can be equivalently described by saying that the $2$--form $d\a$ restricts to a non-degenerate bilinear form on $\xi = \ker\a$. We will always assume that $M$ is oriented and that $\xi$ is positively cooriented, that is there exists a globally defined $\alpha$ such that $\a\wedge (d\a)^{n}$ is a positive volume form on $M$.\\

{\bf The standard contact sphere}. Let us consider the complex space $(\C^{n+1},h)$ with coordinates $(z_0,z_1,\ldots,z_n)$ and its standard Hermitian form $h$. The unit sphere $$\S^{2n+1}=\{z\in\C^{n+1}:\|z\|=1\}$$ is a real hypersurface transverse to the vector field $\partial_r=z_0\partial_{z_0}+\ldots+z_n\partial_{z_n}$. It follows that the 1--form
$$\als=\frac{i}{4}\left(\sum_{i=0}^nz_i d\overline{z}_i-\overline{z}_i dz_i\right)$$
restricted to the hypersurface $\S^{2n+1}$ satisfies $\als\wedge (d\alpha_{\mathrm{st}})^n>0$, hence $(\S^{2n+1},\ker\als)$ is a contact manifold. The contact structure $\xis=\ker\als$ is called the \emph{standard contact structure} on $\S^{2n+1}$ and the defining form $\als$ is the \emph{standard contact form}. Intrinsically, the hyperplane field can be described as $\xis = T\S^{2n+1}\cap i(T\S^{2n+1})$ and $\als$ is its unique (up to scalar) $\mathrm{U}(n+1)$--invariant defining 1--form. This contact structure features in Theorems \ref{thm:sph1},\ref{thm:sph2} and \ref{thm:3SPH}, Proposition \ref{prop:sphsur} and Corollaries \ref{cor:3SPH} and \ref{cor:5SPH}.\\

{\bf The space of contact elements}. Given a smooth manifold $M$ the projectivized cotangent bundle $\P(T^*M)$ is endowed with a canonical contact structure $\xi_{\mathrm{can}}$. An element of $\P(T_{p}^*M)$, called a \emph{contact element} at $p\in M$, is identified with a hyperplane in the tangent space $T_{p}M$. Then the contact structure $\xi_{\mathrm{can}}$ is defined as follows: the velocity vector of a motion of a contact element belongs to $\xi_{\mathrm{can}}$ if and only if the projection of the velocity vector to the point of contact belongs to the contact element itself. This contact structure appears in the proof of Theorem \ref{thm:sph2}, see Section \ref{sec:dyn}.\\ 

{\bf Prequantization spaces}. Let $(B,\omega)$ be a symplectic manifold with an integral symplectic form $\omega\in H^2(B;\Z)$. Since $H^2(B;\Z)\cong [B,B\mathrm{U}(1)]$, there exists a complex line bundle $L_\omega$ endowed with a Hermitian metric with curvature $-i\omega$. Let $\a$ be a principal connection $1$--form for such a Hermitian line bundle. Then the kernel of $\a$ defines a contact structure on the total space of the circle bundle $\S(L_\omega)\longrightarrow B$. The contact manifold $(\S(L_{\o}), \a)$ is called the \emph{prequantization} of $(B,\omega)$. Note that the Reeb flow of $\a$ (defined below) coincides with the circle fibres of the projection and so the flow defines a free $S^{1}$--action on $\S(L_\omega)$ by strict contactomorphisms. In fact any closed contact manifold admitting a free $S^{1}$--action by contactomorphisms can be obtained as a prequantization of some closed symplectic manifold. Prequantization spaces appear in Sections \ref{sec:dyn} and \ref{sec:CWCont}.


\subsection{Contact Hamiltonians}\label{ssec:contactham} There are two groups of symmetries associated to a contact manifold $(M,\xi=\ker\alpha)$: the group of \emph{strict contact transformations} and the group of \emph{contact transformations}. The former depends on the choice of a contact form $\alpha$ whereas the latter does not. Their definitions read as follows:
$$\SCont(M,\alpha)=\{f\in\Diff(M):f^*\alpha=\alpha\},$$
$$\Cont(M,\xi)=\{f\in\Diff(M):f_*\xi=\xi\}.$$

Both groups endowed with the smooth compact-open topology can be given the structure of an infinite dimensional Fr\'echet or convenient Lie group (see \cite[Theorem 43.19]{KrMi} and \cite{Sp}). The tangent space of $\Cont(M, \xi)$ at the identity, hence its Lie algebra, consists of the \emph{contact vector fields} on $M$, i.e.~ vector fields $X$ such that $\mathcal{L}_{X}\alpha = \mu\cdot \alpha$ for some smooth function $\mu\colon M \longrightarrow \mathbb{R}$. In the case of $\SCont(M, \a)$ the tangent space consists of such vector fields with $\mu \equiv 0$, which we call \emph{strictly contact}. The Lie bracket is (minus) the standard Lie bracket of vector fields.\\

There is a different description of the Lie algebras in terms of Hamiltonians. First observe that given a contact form $\alpha$, there exists a unique vector field $R$ on $M$ defined by 
$$i_R\alpha=1,\quad i_Rd\alpha=0.$$
It is called the \emph{Reeb vector field} associated to $\alpha$. The \emph{Reeb flow} of $\alpha$ is the flow of the Reeb field $R$. Let now $X$ be a contact vector field on $M$ an put $H = \alpha(X)$. Then the condition on the Lie derivative of $\a$ is equivalent to the following pair of equations:
$$i_X\alpha=H,\quad i_{X}d\alpha=-dH+i_{R}dH\cdot \alpha.$$
In fact, given a function $H\in C^\infty(M)$ there exists a unique contact vector field $X$ satisfying the above equations. Therefore the map $X \longmapsto \alpha(X) =: H_{X}$ identifies the Lie algebra $\mathrm{Lie}\Cont(M, \xi)$ with the algebra $C^{\infty}(M)$ of smooth functions on $M$. The function $H_{X}$ is called the (contact) \emph{Hamiltonian} of the contact vector field $X$. Note that it depends on the choice of $\a$.\\

Restricting to the Lie algebra of $\SCont(M, \a)$ we can see that a contact vector field $X$ is strictly contact if and only if $i_{R}dH_{X} \equiv 0$, that is if and only if its Hamiltonian $H_{X}$ is a function invariant under the Reeb flow of $\a$. The Lie algebra $\mathrm{Lie}\SCont(M, \a)$ is thus identified with the algebra $C^{\infty}(M)^{R}$ of smooth functions on $M$ invariant under the Reeb flow.\\

Finally, under the above identifications the Lie bracket becomes $$[H_{X}, H_{Y}] = i_{R}(dH_{X})\cdot H_{Y} - i_{X}(dH_{Y})$$ and the adjoint action of $g\in \Cont(M, \xi)$ is given by $\mathrm{Ad}_{g}H = (\lambda\cdot H)\circ g^{-1}$, where $g^{*}\a = \lambda\cdot \a$.\\

The material presented in this section should be sufficient for a good understanding of the main part of the paper.


\section{The inclusion of unitary transformations}\label{sec:hi}

This short section is devoted to the proof of Theorem \ref{thm:sph1}, which is concerned with the standard contact sphere $(\S^{2n+1},\xis)$ introduced in Section \ref{sec:preliminaries}. Since $\xis = T\S^{2n+1}\cap iT\S^{2n+1}$, the group of unitary transformations $\mathrm{U}(n+1)$ acts on $\S^{2n+1}$ preserving $\xis$ and we obtain an inclusion
$$i\colon \mathrm{U}(n+1)\longrightarrow\Cont(\S^{2n+1},\xis).$$
Theorem \ref{thm:sph1} says that the induced map $\pi_*(i)$ on homotopy groups is injective.\\

{\bf Proof of Theorem \ref{thm:sph1}:} Let us consider the space of conformal symplectic frames of $\xis$ at a point $x\in\S^{2n+1}$, that is
$$\mathrm{Fr}_{x}\xis := \{ \tau\colon \mathbb{R}^{2n} \to (\xis)_{x}\,|\, \tau\text{ is a linear isomorphism and } \exists c\in\mathbb{R}_{+}\colon \tau^{*}d\alpha|_{(\xis)_{x}} = c\cdot \omega_{0} \}.$$

Put $\mathrm{Fr}\xis = \bigcup_{x\in \S^{2n+1}} \mathrm{Fr}_{x}\xis$ and denote the natural projection by $p\colon \mathrm{Fr}\xis\longrightarrow\S^{2n+1}$. The conformal symplectic group $$\CSp(2n, \R) := \{g\colon \mathbb{R}^{2n} \to \mathbb{R}^{2n}\,|\, g\text{ is a linear isomorphism and } \exists c\in \mathbb{R}_{+}\colon g^{*}\omega_{0} = c\cdot \omega_{0}\}$$ acts on $\mathrm{Fr}\xis$ from the right by pre-composition of mappings. This action turns $p$ into a principal $\CSp(2n, \R)$--bundle. Note also that the group $\Cont(\S^{2n+1},\xis)$ acts on $\mathrm{Fr}\xis$ from the left via post-composition with the tangent map.\\

Fix a point $x_{0}\in\S^{2n+1}$ with a conformal symplectic frame $\tau_{0}\colon \R^{2n}\longrightarrow (\xis)_{x_{0}}$ at $x_0$ and consider the evaluation map $$\mathrm{ev}\colon \Cont(\S^{2n+1},\xis)\longrightarrow \mathrm{Fr} \xis , \quad \mathrm{ev}(f) = T_{x_{0}}f \circ \tau_{0}.$$
The pullback $i^*\mathrm{ev}$ of the evaluation map via the inclusion $i$ fits into the following diagram
$$\xymatrix{
\CSp(2n,\R) \ar[r] & \mathrm{Fr} \xis \ar[r]^{p} & \S^{2n+1} \\
\mathrm{U}(n) \ar[r] \ar[u] & \mathrm{U}(n+1) \ar[r] \ar[u]^{i^*\mathrm{ev}} & \S^{2n+1} \ar[u]_{\text{id}_{\S^{2n+1}}}
}
$$
Here $\S^{2n+1}$ is viewed as the homogeneous space $\mathrm{U}(n+1)/\mathrm{U}(n)$ and $i^*\mathrm{ev}$ is then a morphism of principal bundles (with different structure groups). Restricted to the fibres $i^*\mathrm{ev}$ yields an inclusion of a maximal compact subgroup $\mathrm{U}(n)$ into $\CSp(2n,\R)$ and thus it is a homotopy equivalence. The base map, being the identity map, is also a homotopy equivalence. Then the five lemma applied to the induced maps between the long exact sequences of homotopy groups for the two fibrations implies that $i^*\mathrm{ev}$ is a weak homotopy equivalence, i.e.~$\pi_{*}\mathrm{ev}\circ \pi_{*}i$ is an isomorphism in all degrees. It follows in particular that the first map $\pi_{*}i\colon \pi_{*}\mathrm{U}(n+1)\longrightarrow\pi_{*}\Cont(\S^{2n+1}, \xis)$ is injective as desired. \qed \\

The unitary transformations preserve even the standard contact form $\als$ and so also the inclusion $\mathrm{U}(n+1)\longrightarrow \SCont(\S^{2n+1},\als)$ induces an injection on homotopy groups. Moreover, Theorem \ref{thm:sph1} implies that the flow of the standard Reeb vector field on the sphere (tangent to the Hopf circles)  produces a non-trivial loop in the fundamental group of $\Cont(\S^{2n+1},\xis)$ -- this was also proven with different methods in \cite{CP}. There exists another argument, also known to E.\,Giroux and V.\,Ginzburg, based on Maslov-type indices to prove the non-triviality of certain homotopy classes. \\

The method used to conclude Theorem \ref{thm:sph1} can be applied to some other contact manifolds. The essential tool is to use the evaluation map and the space of conformal symplectic frames in order to deduce the non-triviality of a given class of contact transformations.


\section{Strict contactomorphisms on spheres}\label{sec:dyn}

The existence of invariant polynomials in the Lie algebra of the group of contact transformations is central for the Chern--Weil methods. This allows us to study the subgroups $\SCont(M,\alpha)$ of strict contact transformations rather than the whole group of contactomorphisms $\Cont(M,\xi)$ for which we have not found any invariant polynomials. The dependence of the strict transformations on $\alpha$ is not simple though and the behaviour of the homotopy type of $\SCont(M,\alpha)$ with respect to a perturbation of $\alpha$ can be almost arbitrary. In this section we provide examples in which the group $\SCont(M,\alpha)$ can be made degenerate just by a $C^\infty$--small perturbation of the $1$--form $\alpha$. The method is fairly general but the family of spheres hereby presented should suffice to illustrate this phenomenon.

\subsection{Ergodic flows}

In this subsection we review the notion of ergodicity of a flow. The relevant context for us is that of a flow in the cotangent bundle, in which case the measure $\mu$ is the Liouville measure of the unit cotangent bundle. The use of 'almost every(where)' in the following paragraphs is understood in the measure theoretic sense.\\

The notion of ergodicity can be considered from the probabilistic viewpoint: a map is ergodic if the process defined by its iteration has time average equal to its total space average almost everywhere. From the geometric perspective this translates into the fact that the points (and subsets) are spread around through the iterations of the map. The exact definition can be stated as follows:

\begin{definition}
A measure preserving flow on a probability space $(X,\SF,\mu)$ is called \emph{ergodic} if its invariant subalgebra is the trivial $\sigma$--algebra generated by the sets of measure $0$.
\end{definition}

The reader is referred to Chapitre 2 in \cite{AA} for details on this subsection. The only relevant property that concerns us is the existence of (many) dense orbits for an ergodic motion.
\begin{lemma}[\cite{AA}]\label{lem:dense}
Almost every orbit of an ergodic flow is dense.
\end{lemma}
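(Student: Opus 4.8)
The plan is to prove that almost every orbit of an ergodic flow is dense, using the standard measure-theoretic characterization of ergodicity together with the separability of the underlying space. The statement implicitly requires $X$ to be a second-countable (separable metrizable) space, so that the notion of ``dense orbit'' is meaningful and the measure interacts well with the topology; I will assume $X$ carries such a structure together with the hypothesis that every non-empty open set has positive measure (which holds in our setting, where $\mu$ is the Liouville measure on the unit cotangent bundle). The key idea is that an orbit fails to be dense precisely when it avoids some basic open set, and ergodicity forces the set of points whose orbit avoids a fixed open set to have measure zero.

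\textbf{Key steps.} First I would fix a countable basis $\{U_{k}\}_{k\in\N}$ for the topology of $X$, which exists by second-countability. For each $k$, consider the set
$$
A_{k} = \{\, x\in X : \text{the forward (or full) orbit of } x \text{ never meets } U_{k} \,\}.
$$
The crucial observation is that $A_{k}$ is invariant under the flow: if the orbit of $x$ avoids $U_{k}$, then so does the orbit of any point on that same orbit. By ergodicity, an invariant measurable set has measure $0$ or $1$. Since $U_{k}$ itself is non-empty and open it has positive measure, and $U_{k}$ is disjoint from $A_{k}$ (no point of $U_{k}$ can have an orbit avoiding $U_{k}$, as the orbit passes through the point itself); hence $A_{k}$ cannot have full measure, forcing $\mu(A_{k}) = 0$.

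Next I would take the union $A = \bigcup_{k\in\N} A_{k}$, which is a countable union of null sets and therefore satisfies $\mu(A) = 0$. For any point $x\notin A$, the orbit of $x$ meets every basic open set $U_{k}$, and since $\{U_{k}\}$ is a basis this means the orbit meets every non-empty open set, i.e.\ the orbit is dense. Thus every point outside the null set $A$ has a dense orbit, which is exactly the claim that almost every orbit is dense.

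\textbf{Main obstacle.} The measure-theoretic bookkeeping is routine once the framework is set up, so the delicate point is really the measurability and invariance of the sets $A_{k}$, which is where the flow (rather than a single transformation) requires a little care: one must verify that $A_{k}$ is genuinely measurable and strictly invariant, so that the dichotomy $\mu(A_{k})\in\{0,1\}$ applies. In the continuous-time setting this is standard but warrants confirming that the ``orbit avoids $U_{k}$'' condition can be expressed through countably many measurable conditions (for instance by intersecting over rational times and using continuity of the flow), and that we use a version of ergodicity for flows (equivalently, that invariant sets are null or conull). Since the excerpt cites \cite{AA} for the background, I would lean on that reference for the precise measure-theoretic definition of ergodicity for flows and simply assemble the argument above.
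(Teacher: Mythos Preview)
The paper does not actually prove this lemma; it is stated with a citation to \cite{AA} (Arnol'd--Avez, \emph{Probl\`emes ergodiques de la m\'ecanique classique}) and no argument is given in the text. Your proof is correct and is the standard one: fix a countable basis, observe that the set of points whose orbit misses a given basic open set is invariant and hence null by ergodicity (using that nonempty open sets have positive measure), and take the countable union. Your caveats about second-countability, positivity of the measure on open sets, and measurability of the $A_{k}$ in the continuous-time case are exactly the right hypotheses to flag, and they are satisfied in the paper's intended setting of the Liouville measure on a unit cotangent bundle.
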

This condition is not sufficient for ergodicity and in general not every orbit will be dense. For instance, the geodesic flow on a hyperbolic surface is ergodic (it is even Anosov) and yet there exist infinitely many periodic orbits.\\

The properties of contact Hamiltonians introduced in Section \ref{sec:preliminaries} allow us to prove the following:
\begin{proposition} \label{prop:erg}
Suppose that $(M,\alpha)$, $\mathrm{dim}\,M>1$, is a closed connected contact manifold with a contact form $\a$ which admits a dense Reeb orbit. Then the strict contactomorphism group $\SCont(M,\alpha)$ is homotopy equivalent to a disjoint union of real lines $\R$, one for each connected component.
\end{proposition}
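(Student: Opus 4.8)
The plan is to show that the Lie algebra $C^\infty(M)^R$ of $\SCont(M,\alpha)$ collapses to the constants, so that the group is essentially one-dimensional on each component. The key observation is that a smooth function $H$ is the Hamiltonian of a strictly contact vector field precisely when $i_R dH \equiv 0$, i.e. $H$ is invariant under the Reeb flow. First I would use the hypothesis that $\alpha$ admits a dense Reeb orbit: if $H\in C^\infty(M)^R$, then $H$ is constant along every Reeb trajectory, hence constant along the closure of the dense orbit, which is all of $M$. Since $H$ is continuous, $H$ must be globally constant. Therefore $\mathrm{Lie}\,\SCont(M,\alpha)\cong \R$, generated by the Reeb vector field $R$ itself (whose Hamiltonian is $\alpha(R)\equiv 1$).

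Next I would translate this infinitesimal rigidity into a statement about the group. The Reeb flow $\{\phi_t\}_{t\in\R}$ defines a homomorphism $\R\longrightarrow \SCont(M,\alpha)$, landing in the identity component. I would argue that the identity component $\SCont_0(M,\alpha)$ is exactly the image of this flow, and then determine whether the flow is injective or periodic. Because $\dim M>1$ and the orbit is dense, the Reeb flow cannot be periodic (a periodic flow has all orbits closed, contradicting density), so the homomorphism $\R\to\SCont(M,\alpha)$ is injective and its image is the full identity component. Thus $\SCont_0(M,\alpha)$ is homeomorphic to $\R$, and the whole group is a disjoint union of copies of $\R$ indexed by its set of connected components $\pi_0\SCont(M,\alpha)$.

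The main obstacle is the second step rather than the first: I must rigorously identify the identity component with the Reeb flow, i.e. show that any $f\in\SCont_0(M,\alpha)$ is $\phi_t$ for some $t$. The infinitesimal computation only tells me that the tangent space at every point of the identity component is one-dimensional and spanned by the (left-translate of the) Reeb field; I then need to integrate this. Concretely, given a smooth path $f_s$ from the identity to $f$ inside $\SCont(M,\alpha)$, its generating time-dependent vector field $X_s$ is strictly contact, so its Hamiltonian $H_{X_s}$ lies in $C^\infty(M)^R=\R$ by the first step, meaning $X_s=c(s)R$ for a smooth scalar $c(s)$. Integrating the flow of this reparametrized Reeb field shows $f=\phi_{T}$ with $T=\int_0^1 c(s)\,ds$, placing $f$ on the Reeb orbit through the identity. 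This argument, together with the non-periodicity, yields that each component is an embedded copy of $\R$; finally I would invoke that the group is a Fréchet/convenient Lie group so that its components are genuinely the homeomorphic images of $\R$, giving the claimed homotopy equivalence with $\bigsqcup_{\pi_0}\R$.
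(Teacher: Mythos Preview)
Your proposal is correct and follows essentially the same approach as the paper: both argue that a Reeb-invariant function must be constant along the dense orbit and hence globally constant, then take a path in the identity component, observe its generating time-dependent Hamiltonian is constant at each time so the path is a reparametrized Reeb flow, and finally use $\dim M>1$ to rule out periodicity and conclude that $t\mapsto \mathrm{Fl}^R_t$ is a bijection (indeed a diffeomorphism) onto the identity component. The only cosmetic difference is that you first isolate the Lie-algebra computation $C^\infty(M)^R\cong\R$ before running the path argument, whereas the paper folds this directly into the analysis of the time-dependent Hamiltonian.
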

\begin{proof}\footnote{The exposition of this proof has been improved after a suggestion of E.\,Shelukhin and we are grateful for this.}
Let $\gamma$ be a dense orbit of the Reeb flow of $\a$ and let $f\in\SCont(M,\alpha)$ be arbitrary but belonging to the connected component of the identity. Then $f$ is the time-one map of a strictly contact Hamiltonian flow $f_{t}$,~i.e. $f=f_{1}$. The flow is generated by a time dependent Hamiltonian $H\colon [0,1]\times M \to \mathbb{R}$ which for each fixed $t$ is invariant under the Reeb flow of $\a$. It follows that for each $t$, $H(t, -)$ is constant on the dense orbit $\gamma$ and hence on the whole of $M$. Consequently, the flow $f_{t}$ at any given time uniformly follows the Reeb orbits and so $f=f_{1}$ equals the Reeb flow at some time $t_{0}$. In other words, the mapping $\phi\colon t\mapsto \mathrm{Fl}_{t}^{R}$ is a surjective continuous map from $\mathbb{R}$ onto the connected component of the identity in $\SCont(M, \a)$.\\

The map $\phi$ is also injective because the orbit $\gamma$ cannot be periodic (if it was, then $M=S^{1}$). It is not difficult to see that $\phi$ is then a homeomorphism of $\mathbb{R}$ onto the connected component (in fact a diffeomorphism, which can be checked using the smooth charts on $\SCont(M, \a)$, see \cite[Theorem 43.19]{KrMi} or \cite{Sp}). Because for a topological group all connected components are homeomorphic, the claim of the proposition follows.
\end{proof}

In Subsection \ref{ssec:ReebI} we provide an example of a contact form on $\S^3$ with an ergodic Reeb flow, hence with a dense Reeb orbit. Note though that it is not possible to find Anosov flows on $\S^3$. Indeed, the periodic points of an Anosov diffeomorphism grow exponentially whereas the action on the homology of $\S^3$ imposes a finite bound on the number of fixed points. This can also be proven by contact topology methods in the case of $\S^3$, see \cite{EG}.


\subsection{Reeb flows}\label{ssec:ReebI} In this subsection we first explain how to obtain a contact form on $\R\P^3,\S^3$ or $\T^3$ with a dense Reeb orbit and then prove Theorem \ref{thm:sph2}.\\

Let us start with $\S^3$, we want to construct a contact form on $(\S^3,\xis)$ with an ergodic Reeb flow. The $3$--sphere fibres over $\S^2$ as the Hopf fibration and we may think of $\S^3$ as the double cover of the unit cotangent bundle $\S(T^*\S^2)$ of $\S^2$. A geometric way of constructing a vector field on a unit cotangent bundle is to fix a metric and to consider its geodesic flow at the energy level 1. To apply this construction to contact topology we need the following fact:\\

{\bf Fact}: (Huygens) Let $(\Sigma,g)$ be a Riemannian surface. Then the geodesic flow on the unit cotangent bundle $\S(T^*\Sigma)$ is the Reeb flow for the unique $\mathrm{U}(1)$--invariant contact form on $\S(T^*\Sigma)$ considered as the space of cooriented contact elements.\\

This follows from (or rather is) the Huygens principle and the definition of the contact structure on the space of contact elements, see Section \ref{sec:preliminaries} and \cite{AG}.\\

Thus in this case the existence of a dense orbit for a Reeb flow is implied by the existence of a dense orbit for the geodesic flow. On the one hand, it is well known that the geodesic flow of a negative curvature surface is ergodic, see \cite{An} for details. On the other hand, the (non-strict) contactomorphism group is not well understood for the 3--folds $\S(T^*\Sigma)$ and so we cannot compare them. However, as stated in Theorem \ref{thm:3SPH} we at least understand the contactomorphism group of the standard contact $\S^3$ (see Section \ref{sec:3SPH} for details).\\

The possible geodesic flows for surfaces of genus $g=0,1$ have been studied in Riemannian geometry. For instance, the use of focusing caps allowed V.\,Donnay to conclude the following:

\begin{theorem}[\cite{Do}]\label{thm:donnay} Any compact orientable surface can be endowed with a Riemannian metric having an ergodic geodesic flow.
\end{theorem}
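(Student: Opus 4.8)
The plan is to reduce to the topologically nontrivial cases and then realize ergodicity through the nonuniform hyperbolicity produced by \emph{focusing caps}. For genus $g\geq 2$ one simply equips the surface with a hyperbolic metric: its geodesic flow is Anosov, hence ergodic by the classical Hopf argument, so nothing new is required there. The content of the statement is therefore concentrated in the sphere $\S^2$ and the torus $\T^2$, where Gauss--Bonnet ($\int K\,dA = 2\pi\chi$) forbids everywhere negative curvature and thus rules out the uniform Anosov mechanism.

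For these two surfaces I would build the metric by gluing a region of nonpositive (flat or negative) curvature to one or more \emph{focusing caps}, that is, small regions of strong positive curvature shaped so that a geodesic traversing a cap is focused \emph{past} its conjugate point before exiting. The geometric point is that sufficiently strong focusing reverses the sign of the infinitesimal effect: a beam of geodesics entering a cap while converging leaves it diverging, thereby mimicking the dispersing behaviour of negative curvature. The metric must be designed smoothly, with a carefully controlled curvature profile on each cap, so that every relevant geodesic crosses a cap transversally and deeply enough for this overfocusing to occur.

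The analytic heart is the behaviour of infinitesimal variations, governed along each geodesic by the Riccati equation $u' + u^2 + K = 0$ for $u = J'/J$, where $J$ is a normal Jacobi field and $K$ the Gaussian curvature. On the nonpositively curved part the diverging solutions ($u>0$) are attracting; the task is to verify that after passing through a focusing cap the ``diverging'' half-line is again mapped strictly into itself. Concretely I would exhibit a measurable invariant cone field in the space of Jacobi data along orbits that spend a definite fraction of their time in the caps, and check eventual strict invariance. By the cone criterion of Wojtkowski and Katok--Burns, strict eventual invariance of such a cone field forces the Lyapunov exponents of the geodesic flow to be nonzero almost everywhere.

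With nonzero exponents in hand, Pesin theory applies: the flow is nonuniformly hyperbolic, its invariant (Liouville) measure splits into at most countably many ergodic components, and almost every orbit carries local stable and unstable manifolds. To upgrade this to full ergodicity I would run the Hopf-type chain argument on Pesin blocks together with a transitivity input — taking the hyperbolic region connected and topologically transitive — so that the components glue into a single one of full measure. I expect the main obstacle to be precisely the passage through the focusing caps: one must quantify the overfocusing so that the cone field is \emph{strictly} (not merely weakly) invariant along almost every orbit, while simultaneously confining the exceptional geodesics — those tangent to a cap, or lingering in the positively curved region without achieving overfocusing — to a set of measure zero. This delicate estimate is the crux of Donnay's construction.
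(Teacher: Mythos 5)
The paper does not prove this statement: it is imported directly from Donnay \cite{Do}, with only the one-line remark that the construction rests on focusing caps. Your outline is a faithful sketch of exactly that argument (hyperbolic metrics in genus $g\geq 2$; for $\S^2$ and $\T^2$, focusing caps producing an eventually strictly invariant cone field, nonzero Lyapunov exponents via the Wojtkowski/Katok--Burns criterion, and a Pesin--Hopf local-ergodicity argument to glue the components), so it matches the approach the paper attributes to Donnay, and the only missing content is the quantitative overfocusing estimate that you explicitly defer to \cite{Do}.
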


Note however that this metric is not a small perturbation of a constant curvature metric. For instance, KAM theory implies that a small perturbation of the flat metric on the torus $\mathbb{T}^2$ contains invariant tori and hence cannot be ergodic.\\

Theorem \ref{thm:donnay} and the Huygens principle yield a contact structure on $\S^3$ with an ergodic Reeb flow. Indeed, consider the two sphere $(\S^2,g)$ endowed with a Riemannian metric whose geodesic flow is ergodic. The unit cotangent bundle $\S(T^*\S^2)$ is diffeomorphic to $\R\P^3=\S^3/\Z_2$ and thus we have an ergodic flow on $\R\P^3$ which coincides with the Reeb flow of a contact form on $\R\P^3$ (due to the Huygens principle). This contact structure is exact symplectically filled by the cotangent bundle $T^*\S^2$ and hence it is tight \cite{Gr}. At this point we obtained a contact form on $\R\P^3$ with (many) dense Reeb orbits. This contact form can be pulled back to the universal cover $\S^3$ yielding a contact form $\alpha$ for the unique (up to isotopy) tight contact structure on $\S^3$. In addition, the Reeb flow of this contact form has many dense orbits.\\

There are various reasons for which we obtain the tight contact structure on $\S^3$. For instance, we may consider an interpolation $\{g_t\}$ between the standard round metric $g_0$ and the metric $g$. The previous construction gives a path of contact forms $\{\alpha_t\}$ such that $\alpha_0$ is the standard contact form on $\S^3$ and $\alpha_1=\alpha$. Gray's stability then implies that $\ker\alpha_0$ is isotopic to $\ker\alpha_1$.\\

In higher dimensions the use of the unit cotangent bundle leads to ergodic flows on manifolds different from the spheres (with the notable exception of $\S^7\longrightarrow\S^4$). A different construction of ergodic flows on lens spaces is presented in \cite{K2,HP}, these methods are however strictly low-dimensional.\\

We will now focus on Theorem \ref{thm:sph2}, i.e.~the existence of a contact form on $\S^{2n+1}$ with an ergodic Reeb flow and thus, by Proposition \ref{prop:erg}, an example where $\SCont(M,\alpha)$ retracts to a set of points. In the argument to prove the theorem we invoke results from \cite{K1}, where the technical construction of the required perturbation is described precisely. \\ 

From the ergodic point of view the essential ingredient available in the standard contact sphere is the existence of two circle symmetries generated by positive Hamiltonians. The hypotheses are also satisfied by other contact manifolds such as toric manifolds. However, the positivity of the associated contact Hamiltonian is a non-trivial contact hypothesis. The precise mathematical framework for Theorem \ref{thm:sph2} is described as follows.\\

{\bf Proof of Theorem \ref{thm:sph2}}: Consider the symplectic manifold $(\R^{2n}\setminus\{0\},\omega_{\mathrm{st}})$. The standard Liouville form $\lambda_{\mathrm{st}}$ induces the contact form $\als$ when restricted to the unit sphere $\S^{2n-1}=\{H(p)=1\}$, with $H(p)=\|p\|^2$ being (twice) the standard kinetic energy. The Reeb vector field associated to $\lambda|_{\{H=1\}}$ is the characteristic foliation direction induced by the symplectic form $\omega_{\mathrm{st}}$ on the hypersurface $\S^{2n-1}$. Since $\omega_{\mathrm{st}}$ is the imaginary part of the standard K\"ahler structure, the leaves coincide with the Hopf circles.\\

The crucial feature we need from the symplectic manifold $(\R^{2n}\setminus\{0\},\omega_{\mathrm{st}})$ is the existence of a $\T^2$--action. Certainly $\R^{2n}\setminus\{0\}$ admits an effective Hamiltonian $\T^{n}$--action given by complex scalar multiplication by $z\in\S^1\subset\C^*$ on the $n$ coordinate complex axes of $\C^n\setminus\{0\}=\R^{2n}\setminus\{0\}$. The moment map of each one-parameter subgroup $\S^1\subset\T^n$ corresponds to the radial coordinate (i.e.~kinetic energy) in the corresponding plane of rotation and so it is a non-negative function. Consider the perturbed Hamiltonian
$$H_{\alpha}=\pi\sum_{i=1}^n\alpha_i|z_i|^2,\quad\alpha=(\alpha_1,\ldots,\alpha_n)\in\R_+^{n}$$
and the 2--torus $\T^2=\langle\alpha,(1,0,\ldots,0)\rangle/\Z^n$. This satisfies the hypothesis of Theorem A in \cite{K1}, which states that given an effective Hamiltonian $\T^2$--action on a Liouville manifold one may perturb a Hamiltonian of the form $H_\alpha$ to another Hamiltonian $\SH$ which is $C^\infty$--close to $H_\alpha$ on a compact set and such that the induced flow on the energy levels of $\SH$ is ergodic with respect to the Liouville measure.\\

The first paragraph in this proof can thus be repeated for $\SH$ and we obtain a contact structure on a level set of $\SH$ whose Reeb vector field has an ergodic flow. The $C^\infty$--closeness implies that such an energy set is diffeomorphic to a sphere and then the Gray stability implies that the contact structure is isotopic to that induced by our original $H$. In conclusion, the contact form induced by the Liouville form on a level set of $\SH$ induces the standard contact structure $(\S^{2n-1},\xis)$ and also has an ergodic Reeb flow.\qed \\

The methods of \cite{K1} also apply to other contact manifolds with at least two positive circle symmetries. In this subsection we provided examples of contact forms for which the strict contactomorphism group is as degenerate as possible. In the next subsection we study the opposite case where the strict contactomorphism group contains a homotopically essential subgroup of symmetries.


\subsection{The group $\SCont(\S^5,\alpha_{\mathrm{st}})$} \label{ssec:5SPH} We shall show here that the homotopy type of (the connected component of the identity of)  the strict contactomorphism group $\SCont(\S^5,\alpha_{\mathrm{st}})$ can be computed from the homotopy type of the group of Hamiltonian diffeomorphisms $\mathrm{Ham}(\mathbb{C}\P^{2}, \omega_{\mathrm{FS}})$. This method can be applied to prequantization spaces in general.\\

Let $(M,\a)$ be a closed connected contact manifold with a contact form $\a$ such that its Reeb flow generates a free $\S^{1}$--action. Recall from Section \ref{sec:preliminaries} that $M$ is then the prequantization of some closed integral symplectic manifold $(B, \o)$. In particular, there is a principal $\S^1$--bundle $p\colon M\longrightarrow B$ such that $p^{*}\o = d\a$. Denote by $\mathrm{SCont}_{0}(M, \a)$ the connected component of the identity in $\SCont(M, \a)$. Then we have the following:

\begin{proposition}[{\cite[Proposition 4]{Vi}}] \label{prop:Vizman} There exists a smooth principal $\S^{1}$--bundle
$$q\colon \mathrm{SCont}_{0}(\mathnormal{M}, \alpha)\longrightarrow\Ham(B, \o)$$
such that the $\S^{1}$--fibre is given by the Reeb flow of $\alpha$. Furthermore, the map $q$ is in fact a homomorphism of Lie groups whose kernel is the circle fibre.
\end{proposition}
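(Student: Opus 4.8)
The plan is to construct the bundle map $q$ directly from the geometry of the prequantization $p\colon M \to B$ and then check separately the three assertions: that $q$ is well-defined and a group homomorphism, that its kernel is exactly the Reeb circle, and that it is a locally trivial principal $\S^1$--bundle. First I would set up $q$ at the level of Lie algebras to guide the construction. Recall from Section \ref{sec:preliminaries} that $\mathrm{Lie}\,\SCont(M,\alpha) \cong C^{\infty}(M)^{R}$, the Reeb-invariant functions. Since the Reeb flow of $\alpha$ coincides with the $\S^1$--fibres of $p$ and $p^{*}\omega = d\alpha$, a Reeb-invariant function on $M$ is constant along fibres and hence descends to a function on $B$; this gives a map $C^{\infty}(M)^{R} \to C^{\infty}(B) \cong \mathrm{Lie}\,\Ham(B,\omega)$ whose kernel is the constant functions, i.e.\ the line spanned by the Hamiltonian $H \equiv 1$ generating the Reeb flow. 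This is the infinitesimal shadow of the statement, and it tells me exactly what $q$ and its kernel should be.

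Next I would construct $q$ globally. Given $f \in \mathrm{SCont}_{0}(M,\alpha)$, the key point is that $f$ preserves $\alpha$, hence preserves $d\alpha = p^{*}\omega$ and the Reeb field $R$ (since $R$ is canonically determined by $\alpha$). Because $f$ preserves the Reeb vector field, it maps Reeb orbits to Reeb orbits, i.e.\ fibres of $p$ to fibres of $p$, so $f$ covers a unique diffeomorphism $\bar{f}$ of $B$ with $p \circ f = \bar{f} \circ p$. I would then check that $\bar{f}^{*}\omega = \omega$, so $\bar{f} \in \mathrm{Symp}(B,\omega)$, and moreover that $\bar{f}$ lies in the connected component of the identity and is in fact Hamiltonian: this follows by decomposing the isotopy $f_{t}$ from the identity to $f$, using that each $f_{t}$ is generated by a Reeb-invariant Hamiltonian $H_{t}$ which descends to $\bar{H}_{t}$ on $B$ generating $\bar{f}_{t}$, so the flux vanishes and $\bar{f} \in \Ham(B,\omega)$. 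Setting $q(f) := \bar{f}$ gives a well-defined map, and $q(f \circ g) = \overline{f \circ g} = \bar{f} \circ \bar{g}$ is immediate from uniqueness of lifts, so $q$ is a group homomorphism.

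I would then identify the kernel and verify the bundle structure. An element $f \in \ker q$ covers $\mathrm{id}_{B}$, so it preserves each fibre of $p$; since it preserves $\alpha$ and hence moves points along the Reeb flow by an amount that is a Reeb-invariant function (constant on fibres and, by connectedness of $B$, the same on all fibres), $f$ must be a global time-$t_{0}$ Reeb flow map $\mathrm{Fl}^{R}_{t_{0}}$. Thus $\ker q$ is precisely the Reeb circle $\S^{1}$, as claimed. For surjectivity and local triviality I would argue that any Hamiltonian isotopy $\bar{f}_{t}$ on $B$ generated by $\bar{H}_{t}$ lifts to the isotopy of strict contactomorphisms generated by the pullback Hamiltonian $p^{*}\bar{H}_{t} \in C^{\infty}(M)^{R}$, giving a section over a neighbourhood of the identity and hence surjectivity onto $\Ham(B,\omega)$; local triviality then follows by translating this local section around using the group structure, which exhibits $q$ as a principal $\S^{1}$--bundle with structure group the kernel $\S^{1}$.

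The main obstacle I expect is not any single conceptual step but the careful functional-analytic bookkeeping in the infinite-dimensional Fr\'echet (or convenient) category: one must verify that $q$ and the local sections are genuinely smooth maps of infinite-dimensional Lie groups, that the lift $f \mapsto \bar{f}$ depends smoothly on $f$, and that the local section over the identity is smooth so as to legitimately conclude the principal bundle structure. These points rely on the smooth chart structure on $\SCont(M,\alpha)$ and $\Ham(B,\omega)$ from \cite[Theorem 43.19]{KrMi} and \cite{Sp}; granting those, the geometric argument above goes through, which is why the statement is attributed to \cite[Proposition 4]{Vi}.
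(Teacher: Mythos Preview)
Your proposal is correct and follows essentially the same approach as the paper's (very brief) proof: strict contactomorphisms preserve the Reeb flow and hence the $\S^1$--fibres of $p$, so they descend to diffeomorphisms of $B$ which are then shown to be Hamiltonian, with the Reeb circle as kernel. You supply considerably more detail than the paper does (the Lie-algebra picture, the explicit kernel computation, surjectivity via lifting Hamiltonian isotopies, and the smoothness caveats), but the underlying strategy is identical; one small point to tighten is your kernel argument, where the reason the ``Reeb-time'' function is globally constant is precisely that $f^{*}\alpha=\alpha$ forces its differential to vanish (connectedness of $B$ then finishes it), rather than connectedness alone.
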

\begin{proof}
Any $\alpha$-strict contactomorphism of $M$ commutes with the Reeb flow of $\alpha$, i.e.~it preserves the $\S^{1}$--fibres of the bundle $p\colon M \longrightarrow B$. Therefore every such contactomorphism descends to a diffeomorphism of $B$ that is in fact a Hamiltonian diffeomorphism of $(B, \omega)$. This projection then defines the desired smooth principal $\S^{1}$--bundle.
\end{proof}

The homotopy type of $\Ham(B, \o)$ is known in some cases. For example, the standard action of the projective unitary group  $\mathrm{PU}(2)$ on $(\mathbb{C}\P^1, \omega_{\mathrm{FS}})$, where $\omega_{\mathrm{FS}}$ is the Fubini--Study symplectic form, induces a homotopy equivalence $\psi\colon \mathrm{PU}(2) \longrightarrow \mathrm{Ham}(\mathbb{C}\P^1, \omega_{\mathrm{FS}})$. The group of Hamiltonian diffeomorphisms of $\C\P^2$ is computed in \cite{Gr} (or see \cite[Theorem 9.5.3]{MS}):

\begin{theorem}[\cite{Gr}]\label{thm:gr} The standard action of $\mathrm{PU}(3)$ on $\mathbb{C}\P^2$ induces a homotopy equivalence
$$\psi\colon \mathrm{PU}(3) \longrightarrow \mathrm{Ham}(\mathbb{C}\P^2, \omega_{\mathrm{FS}}).$$
\end{theorem}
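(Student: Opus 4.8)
The plan is to realize $\psi$ as the inclusion of the stabilizer of an integrable almost complex structure and to prove it is a homotopy equivalence by Gromov's pseudoholomorphic curve method. First I would observe that since $\mathbb{C}\P^2$ is closed and simply connected with $H^1(\mathbb{C}\P^2;\R)=0$, the identity component $\operatorname{Symp}_0(\mathbb{C}\P^2,\o_{\mathrm{FS}})$ coincides with $\Ham(\mathbb{C}\P^2,\o_{\mathrm{FS}})$, so it suffices to show that $\operatorname{Symp}_0$ deformation retracts onto $\mathrm{PU}(3)$. The group $\mathrm{PU}(3)$ is exactly the group of K\"ahler isometries, i.e. the symplectomorphisms that are also $J_0$--holomorphic for the standard complex structure $J_0$. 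Letting $G=\operatorname{Symp}_0$ act on the space $\mathcal{J}$ of $\o_{\mathrm{FS}}$--compatible almost complex structures by $f\cdot J=df\circ J\circ df^{-1}$, the stabilizer of $J_0$ is precisely $\mathrm{PU}(3)$. Since $\mathcal{J}$ is contractible (its fibres are the contractible spaces of compatible linear complex structures $\Sp(4,\R)/\mathrm{U}(2)$), the argument reduces to showing that the orbit $G\cdot J_0\cong G/\mathrm{PU}(3)$ is contractible and that the orbit map is a locally trivial fibration.

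The geometric heart of the proof is Gromov's analysis of $J$--holomorphic lines. I would fix the homology class $A=[\mathbb{C}\P^1]\in H_2(\mathbb{C}\P^2;\Z)$, which satisfies $A\cdot A=1$ and $c_1(A)=3$, and study for each $J\in\mathcal{J}$ the moduli space $\mathcal{M}_J$ of unparametrized embedded $J$--holomorphic spheres in class $A$. Three classical facts must be assembled and checked uniformly in $J$: (i) \emph{automatic regularity}, since for an embedded sphere in a four--manifold with normal Chern number $A\cdot A=1\geq 0$ the linearized Cauchy--Riemann operator is surjective, so $\mathcal{M}_J$ is a smooth four--manifold (the expected dimension for unparametrized genus--zero curves with $c_1(A)=3$); (ii) \emph{Gromov compactness with no bubbling}, since $A$ has minimal symplectic area and cannot split as a sum of classes of positive area, so $\mathcal{M}_J$ is compact; and (iii) \emph{positivity of intersections}, so two distinct $J$--holomorphic curves in class $A$ meet in exactly $A\cdot A=1$ point. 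Together these show that for \emph{every} compatible $J$ the $J$--lines satisfy the incidence axioms of a projective plane, with a unique line through two distinct points and a $\mathbb{C}\P^1$ of lines through each point, and that $\mathcal{M}_J$ is diffeomorphic to the dual plane $\mathbb{C}\P^2$.

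With the line structure in hand I would conclude as follows. By the classification of K\"ahler structures on $\mathbb{C}\P^2$, every \emph{integrable} $\o_{\mathrm{FS}}$--compatible $J$ is biholomorphic to $J_0$, and a Moser argument (using that $H^2(\mathbb{C}\P^2;\R)=\R$, so the relevant forms are cohomologous) places it in the single orbit $G\cdot J_0$; thus this orbit is exactly the space of integrable compatible structures. The canonical $J$--line incidence geometry then furnishes, continuously in $J$, a straightening of the $J$--lines to the standard lines, producing a deformation retraction of the contractible space $\mathcal{J}$ onto the orbit $G\cdot J_0$. Hence $G\cdot J_0$ is itself contractible, and the locally trivial fibration $\mathrm{PU}(3)\longrightarrow G\longrightarrow G\cdot J_0$ (local triviality from the existence of slices for the $G$--action) yields $\pi_*\mathrm{PU}(3)\cong\pi_*G$ via its long exact sequence. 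Since these groups have the homotopy type of CW complexes, the inclusion $\psi\colon\mathrm{PU}(3)\longrightarrow\Ham(\mathbb{C}\P^2,\o_{\mathrm{FS}})$ is a weak, hence genuine, homotopy equivalence.

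The main obstacle is the analytic package underlying the middle paragraph: establishing automatic regularity, the absence of bubbling in Gromov compactness, and positivity of intersections rigorously and uniformly over all compatible $J$, and then upgrading the resulting abstract projective--plane structure to an honestly continuous (indeed smooth) straightening map depending on $J$. The most delicate point is ensuring that this straightening defines a genuine deformation retraction of $\mathcal{J}$ onto the orbit and that the orbit map is a locally trivial fibration rather than merely a surjection; this is where the existence of local slices for the action of the infinite--dimensional group $G$ must be invoked.
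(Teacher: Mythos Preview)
The paper does not give its own proof of this theorem: it is quoted as a result of Gromov \cite{Gr} (with a pointer to \cite[Theorem 9.5.3]{MS} for a detailed treatment) and then used as a black box in the proof of Corollary \ref{cor:5SPH}. So there is nothing in the paper to compare your proposal against beyond the citation itself.

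That said, your sketch is a faithful outline of the standard argument attributed to Gromov and written up in McDuff--Salamon. The reduction $\Ham=\operatorname{Symp}_0$ via $H^1(\C\P^2;\R)=0$, the identification of $\mathrm{PU}(3)$ as the stabiliser of $J_0$ in the contractible space $\mathcal{J}$ of compatible almost complex structures, and the trio of analytic inputs (automatic regularity for embedded spheres with $A\cdot A\geq 0$, Gromov compactness with no bubbling in the minimal class $A=[\C\P^1]$, positivity of intersections) are exactly the ingredients used there. One small point worth tightening: you do not actually need a deformation retraction of $\mathcal{J}$ onto the orbit $G\cdot J_0$; a retraction suffices, since a retract of a contractible space is contractible. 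The $J$--line geometry produces, continuously in $J$, a symplectomorphism $\phi_J$ taking standard lines to $J$--lines, and then $J\mapsto (\phi_J)_*J_0$ is the desired retraction onto the integrable locus. You correctly flag the genuinely delicate step, namely assembling the straightening continuously in $J$ and checking local triviality of the orbit map; this is precisely where the careful accounts in the literature do the real work.
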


These two results allow us to compute the homotopy type of $\SCont_{0}(\S^5,\als)$.

\begin{corollary} \label{cor:5SPH}
The standard action of the unitary group $\mathrm{U}(3)$ on $(\S^5, \alpha_{\mathrm{st}})$ induces a homotopy equivalence
$\varphi\colon \mathrm{U}(3)\longrightarrow \mathrm{SCont}_{0}(\S^5,\alpha_{\mathrm{st}}).$
\end{corollary}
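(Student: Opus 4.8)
The plan is to exhibit both $\mathrm{U}(3)$ and $\mathrm{SCont}_{0}(\S^5, \als)$ as total spaces of principal $\S^1$--bundles and then to compare these bundles by a map whose restriction to fibres and whose induced map on bases are already known to be homotopy equivalences. First I would record that $(\S^5, \als)$ is the prequantization of $(\C\P^2, \omega_{\mathrm{FS}})$: the Hopf fibration $\S^5 \longrightarrow \C\P^2$ is a principal $\S^1$--bundle, the Reeb flow of $\als$ is precisely the free fibrewise rotation, and $d\als$ descends to a positive multiple of $\omega_{\mathrm{FS}}$. Proposition \ref{prop:Vizman} then furnishes a principal $\S^1$--bundle
$$q\colon \mathrm{SCont}_{0}(\S^5, \als) \longrightarrow \mathrm{Ham}(\C\P^2, \omega_{\mathrm{FS}})$$
whose kernel $\ker q$ is exactly the Reeb circle $\{\mathrm{Fl}_{t}^{R}\}$.

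On the unitary side the corresponding bundle is the central quotient $\mathrm{U}(1) \longrightarrow \mathrm{U}(3) \stackrel{\pi}{\longrightarrow} \mathrm{PU}(3)$, where $\mathrm{U}(1) = Z(\mathrm{U}(3))$ is the subgroup of scalar matrices $\{e^{i\theta}\cdot \mathrm{Id}\}$. I would then assemble the ladder of fibrations
$$\xymatrix{
\mathrm{U}(1) \ar[r] \ar[d] & \mathrm{U}(3) \ar[r]^{\pi} \ar[d]^{\varphi} & \mathrm{PU}(3) \ar[d]^{\psi} \\
\S^1 \ar[r] & \mathrm{SCont}_{0}(\S^5, \als) \ar[r]^{q} & \mathrm{Ham}(\C\P^2, \omega_{\mathrm{FS}})
}$$
The right-hand square commutes because the strict contactomorphism $\varphi(g)\colon z\mapsto gz$ projects under $q$ to the map $[z]\mapsto [gz]$ on $\C\P^2$, which is nothing but $\psi(\pi(g))$; the base map $\psi$ is Gromov's homotopy equivalence from Theorem \ref{thm:gr}. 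Since $\mathrm{PU}(3)$ and $\mathrm{Ham}(\C\P^2, \omega_{\mathrm{FS}})$ are connected, $\varphi$ does land in the identity component.

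The one point I expect to require genuine care is the induced map on fibres $\mathrm{U}(1)\longrightarrow \S^1$, that is the restriction of $\varphi$ to the centre. A scalar $e^{i\theta}\cdot\mathrm{Id}$ acts on $\S^5$ by $z\mapsto e^{i\theta}z$; on the other hand the Reeb field of $\als$ is $R = 2\sum_{j}(x_{j}\partial_{y_{j}} - y_{j}\partial_{x_{j}})$, whose flow is $\mathrm{Fl}_{t}^{R}(z) = e^{2it}z$ and so has period $\pi$. Comparing the two gives $\varphi(e^{i\theta}\cdot\mathrm{Id}) = \mathrm{Fl}_{\theta/2}^{R}$, so the fibre map is the degree--one isomorphism of circles $e^{i\theta}\mapsto \mathrm{Fl}_{\theta/2}^{R}$, in particular a homotopy equivalence. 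This degree computation is the crux: were the fibre map instead a multiple cover, the comparison would already fail on $\pi_{1}$.

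With the fibre and base maps both homotopy equivalences, the five lemma applied to the two long exact sequences of homotopy groups shows that $\varphi$ is a weak homotopy equivalence. Finally, since $\mathrm{U}(3)$ and $\mathrm{SCont}_{0}(\S^5, \als)$ both carry the homotopy type of a CW complex (the latter as a convenient/Fr\'echet Lie group, cf.\ \cite{KrMi, Sp}), Whitehead's theorem promotes this to a genuine homotopy equivalence, which is the assertion of the corollary.
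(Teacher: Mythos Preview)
Your argument is correct and follows essentially the same route as the paper: both set up the morphism of principal $\S^1$--bundles $\mathrm{U}(3)\to\mathrm{PU}(3)$ over $\mathrm{SCont}_0(\S^5,\als)\to\mathrm{Ham}(\C\P^2,\o_{\mathrm{FS}})$, invoke Gromov's theorem on the base, and finish with the five lemma plus Whitehead. Your explicit degree--one check on the fibre map is a welcome refinement; the paper simply asserts that $\varphi$ restricted to the circle fibres ``equals the Reeb flow and hence is a diffeomorphism,'' glossing over the reparametrisation you work out.
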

\begin{proof} The sphere $(\S^5,\a_{\mathrm{st}})$ is the total space of the Hopf fibration $p\colon \S^{5} \longrightarrow \mathbb{C}\P^{2}$ and this fibration can be regarded as the prequantization of $\C\P^2$ with its Fubini--Study symplectic form. Consider the following morphism of circle bundles
\begin{center}
\begin{minipage}{0.5\linewidth}
\xymatrix{ \mathrm{U}(3) \ar[r]^<<<<<{\varphi} \ar[d] & \mathrm{SCont}_{0}(\S^{5}, \als) \ar[d]^{q} \\ \mathrm{PU}(3)  \ar[r]^<<<<<{\psi} & \mathrm{Ham}(\mathbb{C}\P^2, \omega_{\mathrm{FS}}) }
\end{minipage}
\end{center}
where the projection on the right is the principal $\S^{1}$--bundle from Proposition \ref{prop:Vizman} and the one on the left is the quotient map. By Theorem \ref{thm:gr} the base map $\psi$ is a homotopy equivalence. The map $\varphi$ restricted to the circle fibres equals the Reeb flow and hence it is a diffeomorphism (and in particular a homotopy equivalence). The five lemma applied to the corresponding morphisms between the long exact sequences of homotopy groups for the two fibrations implies that $\varphi$ is a weak homotopy equivalence. Since $\mathrm{SCont}_{0}(\S^{5}, \als)$ has the homotopy type of a CW--complex (see Remark \ref{remark:CWtype} below), $\varphi$ is a genuine homotopy equivalence by the Whitehead theorem.
\end{proof}

Note that the same argument applied to the prequantization $(\S^3, \als) \longrightarrow (\C\P^1, \o_{\mathrm{FS}})$ proves that also the inclusion $\mathrm{U}(2)\longrightarrow \SCont_0(\S^3,\als)$ is a homotopy equivalence. If we understood the inclusion $\mathrm{PU}(n+1) \longrightarrow \mathrm{Ham}(\mathbb{C}\P^n, \omega_{\mathrm{FS}})$ for general $n$, we could deduce similar results even for higher dimensional spheres.

\begin{remark}\label{remark:CWtype} For a closed contact manifold $\cmfd$ the groups $\SCont(M, \a)$ and $\Cont(M, \xi)$ endowed with the smooth compact--open topology can be given the structure of a Fr\'{e}chet manifold (see \cite[Theorem 43.19]{KrMi} or \cite{Sp}). In particular, the topology is Hausdorff, metrizable and locally ANR. It follows from \cite[Theorem 5]{Pa} that the groups are ANR. Moreover, the topology is also separable and so this combined with \cite[Theorem 1]{M2} implies that the contactomorphism groups have the homotopy type of a countable CW--complex. 
\end{remark}


\section{The contactomorphism group of $(\S^3,\xi_{\mathrm{st}})$} \label{sec:3SPH}

In this section we shall describe the homotopy type of the group $\Cont(\S^3, \xis)$ and that of the space of tight contact structures on $\S^3$. Orient $\S^{3}$ as the boundary of the unit ball in $\R^4$. There are two ingredients that allow us to compute the homotopy types. The first one is the knowledge of the homotopy type of the group $\Diff_{+}(\S^3)$ of orientation preserving diffeomorphisms of $\S^{3}$.

\begin{theorem}[\cite{Ha}] \label{thm:Hatcher} The standard action of the special orthogonal group $\mathrm{SO}(4)$ on $\S^{3}$ induces a homotopy equivalence $\mathrm{SO}(4)\longrightarrow\Diff_{+}(\S^3)$.
\end{theorem}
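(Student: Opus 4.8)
The plan is to reduce the statement, through a sequence of evaluation fibrations, to the contractibility of the diffeomorphism group of the $3$--ball fixing its boundary, which is the geometric core of the Smale conjecture. Since $\S^3$ is parallelizable, the group $\mathrm{SO}(4)$ is canonically identified with the bundle $\Fr^{+}(\S^3)$ of oriented orthonormal frames: fixing a basepoint $p_0\in\S^3$ and a reference frame, the map $A\mapsto (A(p_0), dA_{p_0})$ is a diffeomorphism $\mathrm{SO}(4)\cong\Fr^{+}(\S^3)$. Evaluating the $1$--jet at $p_0$ then gives a map of fibrations
\[
\xymatrix{
\{*\} \ar[r] \ar[d] & \Diff(\S^3, j^{1}_{p_0}) \ar[d] \\
\mathrm{SO}(4) \ar[r]^{i} \ar[d]_{\ev} & \Diff_{+}(\S^3) \ar[d]^{\ev} \\
\Fr^{+}(\S^3) \ar[r]^{\cong} & \Fr^{+}(\S^3)
}
\]
in which $\ev(f)=(f(p_0), df_{p_0})$, the two columns are fibrations, the bottom map is the identity, and the fibre of the left column is a single point. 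The five lemma applied to the two long exact sequences of homotopy groups shows that $i$ is a weak homotopy equivalence if and only if the fibre $\Diff(\S^3, j^{1}_{p_0})$ of diffeomorphisms fixing the $1$--jet at $p_0$ is weakly contractible; since the groups involved have the homotopy type of CW--complexes (cf.\ Remark \ref{remark:CWtype}), Whitehead's theorem upgrades this to a genuine homotopy equivalence.

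It remains to prove that $\Diff(\S^3, j^{1}_{p_0})$ is contractible. First I would replace the condition of fixing the $1$--jet by the condition of being the identity on a fixed small closed ball $\overline{B}$ around $p_0$: the inclusion $\Diff(\S^3, \overline{B})\hookrightarrow\Diff(\S^3, j^{1}_{p_0})$ is a homotopy equivalence because the space of embeddings of a ball into $\S^3$ with prescribed $1$--jet at the centre is contractible, via the standard rescaling (Alexander--type) isotopy $e\mapsto e(s\,\cdot)/s$. Excising $\overline{B}$ identifies $\Diff(\S^3, \overline{B})$ with $\Diff(D^3\ \mathrm{rel}\ \partial)$, so the whole theorem reduces to the assertion $\Diff(D^3\ \mathrm{rel}\ \partial)\simeq *$.

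The main obstacle is precisely this last contractibility, which is the substantive content of \cite{Ha} and admits no soft proof. Here one cannot proceed by the formal five--lemma arguments used above; instead the strategy is to understand families of diffeomorphisms through their action on a canonical family of surfaces. Concretely, one studies the space of smoothly embedded $2$--spheres in $\S^3$ (equivalently, Heegaard tori, or incompressible surfaces in the relevant complementary pieces) and proves, by a parametrized analogue of Cerf theory combined with the theory of incompressible surfaces in Haken $3$--manifolds, that this space deformation retracts onto the orbit of round spheres. Feeding this back through the associated restriction fibrations collapses the diffeomorphism group onto $\mathrm{SO}(4)$. This surface--theoretic input is the genuinely three--dimensional and technically demanding step; everything else is the formal bundle--theoretic bookkeeping sketched above.
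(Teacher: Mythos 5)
The paper offers no proof of this statement: it is imported wholesale as a citation of Hatcher's resolution of the Smale conjecture in \cite{Ha}, so there is nothing internal to compare your argument against. Your reduction is the standard equivalence between the two formulations of that conjecture and is correct: the evaluation fibration $\Diff_{+}(\S^3)\to\Fr^{+}(\S^3)$ (strictly, the image of $\ev$ is the bundle of oriented $\mathrm{GL}^{+}(3,\R)$--frames, which deformation retracts onto the orthonormal frame bundle $\cong\mathrm{SO}(4)$ by Gram--Schmidt, a point worth one sentence), the passage from fixing a $1$--jet to fixing a ball via the rescaling isotopy, and the excision identifying the fibre with $\Diff(D^3\ \mathrm{rel}\ \partial)$ are all standard and sound. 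What remains, the contractibility of $\Diff(D^3\ \mathrm{rel}\ \partial)$, is precisely the content of \cite{Ha}; your closing paragraph is an accurate description of Hatcher's strategy but is a sketch, not a proof, and no short self-contained proof is known. So your proposal proves exactly as much as can reasonably be proven here, namely that the theorem as stated is equivalent to the Smale conjecture, and for the rest you must, like the paper, cite \cite{Ha} as a black box.
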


Let $\Xi(\S^3)$ be the space of all cooriented contact structures on $\S^3$ and $\mathbb{T} \subseteq \Xi(\S^{3})$ denote the subspace of tight contact structures which induce the positive orientation on $\S^{3}$, i.e.~for any contact form $\alpha$ defining the contact distribution the volume form $\alpha\wedge d\alpha$ is positive. Fix the point $p = (1,0,0,0)\in \S^{3}$ and consider the subspace $\tfix := \{ \xi \in \mathbb{T} \,|\, \xi(p) = \xis(p) \}$ of positive tight contact structures which agree with the standard one at $p$.\\

By \cite[Theorem 2.1.1]{E2} the contact structure $\xis$ is the unique positive tight contact structure on $\S^{3}$ up to isotopy, which according to Gray's stability theorem means that $\mathbb{T}$ is exactly the connected component of $\xis$ in $\Xi(\S^{3})$. The topology of the subspace $\tfix$ turns out to be even simpler.

\begin{theorem}[{\cite[Theorem 2.4.2]{E2}}] \label{thm:Eli} The space $\tfix$ is contractible.
\end{theorem}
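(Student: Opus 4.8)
The plan is to prove that $\tfix$ is weakly contractible and then upgrade this to genuine contractibility using that $\tfix$, being a separable metrizable ANR, has the homotopy type of a CW--complex (as in Remark~\ref{remark:CWtype}). Weak contractibility I would not try to obtain from Eliashberg's $\pi_0$--classification \cite[Theorem 2.1.1]{E2} alone, since that only gives $\pi_{0}\tfix=\ast$; the full parametric content is needed. Instead I would reduce the statement to Eliashberg's theorem on the contractibility of the space of tight contact structures on a $3$--ball with a fixed standard germ along the boundary. The whole purpose of prescribing $\xi(p)=\xis(p)$ at the single point $p$ is to rigidify the decomposition of $\S^3$ into a small ball around $p$ and its complementary ball: without this normalization one retains a rotational ambiguity, and indeed the unconstrained space $\mathbb{T}$ will turn out to be $\S^2$ rather than a point.

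Concretely, I would fix a small round ball $B_\e$ about $p$ and consider the restriction--to--germ map
$$r\colon \tfix \longrightarrow \mathcal{G},\qquad r(\xi)=[\xi]_{p},$$
where $\mathcal{G}$ is the space of germs at $p$ of positive contact structures $\xi$ with $\xi(p)=\xis(p)$. The target $\mathcal{G}$ is contractible by the parametric Darboux theorem: every such germ is isomorphic to the standard one, and the straightening diffeomorphisms can be chosen smoothly in the germ, producing a canonical retraction of $\mathcal{G}$ onto the standard germ $[\xis]_{p}$. Note that tightness plays no role here, as all contact structures are standard near a point.

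Second, I would argue that $r$ is a fibration in the convenient category of \cite{KrMi}. This is the parametric Gray stability / isotopy--extension step: a family of structures over a disk, together with a chosen lift of its image under $r$, can be realized by an ambient isotopy supported near $p$. The fibre of $r$ over $[\xis]_{p}$ consists of those $\xi\in\tfix$ that are standard on a neighbourhood of $p$; restricting to the complementary closed ball $B^3=\S^3\setminus \mathrm{int}\,B_\e$ identifies this fibre with the space of tight contact structures on $B^3$ that agree with $\xis$ near $\partial B^3$, the inverse identification extending any such structure by $\xis$ over $B_\e$.

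Finally, this fibre is contractible by Eliashberg's convex--surface analysis of tight contact structures on the ball, which is exactly the parametric uniqueness underlying \cite[Theorem 2.4.2]{E2}. This is the genuine analytic heart of the argument and the main obstacle: it rests on the elimination of singularities of characteristic foliations together with Bennequin's inequality to control these foliations in families, and it is precisely the strengthening of the $\pi_0$--classification that a soft argument cannot supply. Granting it, the long exact sequence of the fibration $r$, whose base and fibre are both contractible, forces $\pi_{k}(\tfix)=0$ for all $k$; the CW homotopy type of $\tfix$ then yields contractibility by Whitehead's theorem.
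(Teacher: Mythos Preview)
The paper does not give its own proof of this statement: Theorem~\ref{thm:Eli} is quoted verbatim from Eliashberg \cite[Theorem 2.4.2]{E2} and used as a black box in the proof of Theorem~\ref{thm:3SPH}. There is therefore nothing in the paper to compare your argument against.

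That said, your outline is a faithful sketch of the standard route to Eliashberg's result: normalize near $p$ via parametric Darboux, fibre over the local data by parametric Gray/isotopy extension, and identify the fibre with the space of tight contact structures on the complementary $3$--ball standard near the boundary, whose contractibility is the genuine hard input from \cite{E2}. One small technical wrinkle: the fibre of your germ map $r$ over $[\xis]_{p}$ consists of structures agreeing with $\xis$ on \emph{some} neighbourhood of $p$, not on the fixed ball $B_{\e}$, so an additional (easy) retraction is needed before the identification with structures on $\S^{3}\setminus\mathrm{int}\,B_{\e}$; working with restrictions to a fixed small ball rather than germs avoids this issue and also makes the fibration claim cleaner to verify.
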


This is the second ingredient that allows us to prove Theorem \ref{thm:3SPH}.\\

{\bf Proof of Theorem \ref{thm:3SPH}:} 1. The strategy is to find a fibre bundle $\mathbb{T}\longrightarrow\S^2$ whose typical fibre is the space $\tfix$. Then Theorem \ref{thm:Eli} implies the first claim of the statement.\\

Let us construct such a map. Trivialize the tangent bundle $T\S^3 \subseteq \R^4 \cong \H^{1}$ using the vector fields
\begin{align*}
X(x_{1}, y_{1}, x_{2}, y_{2}) &= (-y_{1}, x_{1}, -y_{2}, x_{2})^{t} = i\cdot(x_1,y_1,x_2,y_2)^{t}, \\
Y(x_{1}, y_{1}, x_{2}, y_{2}) &= (-x_{2}, y_{2}, x_{1}, -y_{1})^{t} = j\cdot(x_1,y_1,x_2,y_2)^{t},\\
Z(x_{1}, y_{1}, x_{2}, y_{2}) &= (-y_{2}, -x_{2}, y_{1}, x_{1})^{t} = k\cdot(x_1,y_1,x_2,y_2)^{t}.
\end{align*}
Note that $X, Y, Z$ are orthonormal with respect to the standard Riemannian metric and so the tangent sphere bundle can be described as
$$\S(T\S^{3}) = \{ aX+bY+cZ \,|\, a,b,c \in \mathbb{R}\colon a^2+b^2+c^2 = 1\} \cong \S^{3}\times \S^{2}.$$

A cooriented (hyper)plane distribution $\xi \subseteq T\S^{3}$ is uniquely determined by its positive unit normal vector field $\mathbf{n}_{\xi}$ that, using the trivialization of $T\S^{3}$ above, defines a unique Gauss map $\mathbf{n}_{\xi}\colon \S^{3}\longrightarrow\S^{2}$. This establishes a bijective correspondence between cooriented plane distributions in $T\S^{3}$ and smooth maps $\S^{3}\longrightarrow\S^{2}$, in which the standard contact structure $\xis$ corresponds to the map $\mathbf{n}_{\xis}\colon p \longmapsto X(p)$.\\

Fix the point $p=(1,0,0,0)$ and consider the evaluation map $\ev\colon \mathbb{T}\longrightarrow\S^{2}$ defined by $\ev(\xi) = \mathbf{n}_{\xi}(p)$. We shall prove that $\ev$ is a fibre bundle with typical fibre $\tfix$ by constructing local trivializations.\\

The local charts are provided by the appropriate manifold of frames. Indeed, consider the Stiefel manifold $\stfl$ of orthonormal 2--frames in $\mathbb{R}^{3}$ and the fibre bundle $\pi\colon \stfl\longrightarrow\S^{2}$ defined by mapping a two--frame $(\mathbf{u},\mathbf{v})$ to its vector product $\mathbf{u}\times \mathbf{v}$, i.e.~the unique unit vector positively orthogonal to both $\mathbf{u}$ and $\mathbf{v}$. Identify $\stfl$ with $\mathrm{SO}(3)$ by assigning to $(\mathbf{u},\mathbf{v})$ the matrix with ordered columns $\mathbf{u}\times \mathbf{v}, \mathbf{u}$ and $\mathbf{v}$. Then the projection $\pi$ sends a matrix $A \in \mathrm{SO}(3)$ to its first column $c_{1}(A) \in \S^{2}$. \\ 

Given a local section $s\colon U \subseteq \S^{2}\longrightarrow\stfl$ of $\pi$, define for all $b\in U$
$$A_{b} := \mathrm{diag}(1, s(b)) \in \mathrm{SO}(4) \subseteq \Diff_{+}(\S^3),$$
where $s(b)\in SO(3)\cong\stfl$. This induces a local trivialization of $\ev\colon \mathbb{T}\longrightarrow \S^{2}$ over $U$ defined as
$$\psi_{s}\colon U \times \tfix\longrightarrow\mathbb{T}, \quad \psi_{s}(b, \xi) = (A_{b})_{*}\xi, \quad \text{where} \quad (A_{b})_{*}\xi(q) = TA_{b}\bigl(\xi(A_{b}^{-1}(q))\bigr), q\in \S^3.$$
Note that $(A_{b})_{*}\xi$ is indeed a positive tight contact structure on $\S^3$.\\

Let us check that $\psi$ is a bundle chart. First the composition $\ev \circ \psi_{s}$ equals the projection onto the first factor:
$$(\ev\circ\psi_s)(b,\xi)=\ev((A_{b})_{*}\xi) = \mathbf{n}_{(A_{b})_{*}\xi}(p) = A_{b}\cdot \mathbf{n}_{\xi}(p) = c_{2}(A_{b}) = c_{1}(s(b)) = \pi(s(b)) = b$$
since $\mathbf{n}_{\xi}(p) = \mathbf{n}_{\xis}(p) = (0,1,0,0)^{t}$. Furthermore, the inverse of $\psi_{s}$ is given by
$$\varphi_{s}\colon \ev^{-1}(U)\longrightarrow U \times \tfix, \quad \varphi_{s}(\eta) = \left(\ev(\eta), \left(A_{\ev(\eta)}^{-1}\right)_{*}\eta \right).$$
Indeed, put $b=\ev(\eta)=\mathbf{n}_{\eta}(p)$. Then we need to verify that $(A_{b}^{-1})_{*}\eta \in \tfix$. Because the matrix $A_{b} \in \mathrm{SO}(4)$ is orthogonal, we have $A_{b}^{-1} = A_{b}^{t}$ and so
$$\mathbf{n}_{(A_{b}^{-1})_{*}\eta}(p) = A_{b}^{-1}\cdot \mathbf{n}_{\eta}(p) = A_{b}^{t}\cdot c_{2}(A_{b}) = (0,1,0,0)^{t} = \mathbf{n}_{\xis}(p).$$
To sum up, we found a system of bundle charts so that the map $\ev\colon \mathbb{T}\longrightarrow\S^{2}$ is a fibre bundle with typical fibre $\tfix$.\\

Since the space $\tfix$ is contractible by Theorem \ref{thm:Eli}, we deduce from the long exact sequence of homotopy groups for a fibration that $\ev$ is a weak homotopy equivalence. The topological space $\mathbb{T}$ has the homotopy type of a CW--complex\footnote{By an argument analogous to that of Remark \ref{remark:CWtype}.} and thus $\ev$ is a genuine homotopy equivalence.\\

2. Consider the map $\Psi\colon \Diff_{+}(\S^3)\longrightarrow \mathbb{T}$ defined by $\Psi(f) = f_{*}\xis$. This is a surjective map by Gray's stability theorem and the fibre over $\xis$ is $\Psi^{-1}(\xis) = \Cont(\S^3, \xis)$. The parametric version of Gray's stability theorem implies that $\Psi$ has the homotopy lifting property with respect to discs and smooth isotopies. By standard arguments for Serre fibrations this implies in particular that
$$\pi_{k}(\Diff_{+}(\S^3), \Cont(\S^3, \xis)) \cong \pi_{k}(\mathbb{T}),\, k\geq 0.$$
In the same spirit of the argument in the first part of the proof we can show that
$$(\ev \circ \Psi)|_{\mathrm{SO}(4)}\colon \mathrm{SO}(4)\longrightarrow \S^{2}$$
is a fibre bundle with fibre $\mathrm{U}(2)$. By Theorem \ref{thm:Hatcher} $\Diff_{+}(\S^3) \simeq \mathrm{SO}(4)$ and so for all $k\geq 0$
\begin{align*} \pi_{k}(\Diff_{+}(\S^3), \mathrm{U}(2)) \cong \pi_{k}(\mathrm{SO}(4), \mathrm{U}(2)) \cong \pi_{k}(\S^{2}) \cong \pi_{k}(\mathbb{T}) \cong \pi_{k}(\Diff_{+}(\S^3), \Cont(\S^3, \xis)).
\end{align*}
Now consider the long exact sequence of homotopy groups associated to the triple $(D, C, \mathrm{U}) := (\Diff_{+}(\S^3), \Cont(\S^3, \xis), \mathrm{U}(2))$:
\begin{align*} \cdots \to \pi_{k+1}(D, \mathrm{U}) \to \pi_{k+1}(D, C)  \to \pi_{k}(C, \mathrm{U}) \to \pi_{k}(D, \mathrm{U}) \to \pi_{k}(D, C) \to \cdots. \end{align*}
This long exact sequence and the above isomorphisms imply that $\pi_{k}(\Cont(\S^3, \xis), \mathrm{U}(2)) = 0$ for all $k$, i.e.~the inclusion $i\colon \mathrm{U}(2) \longrightarrow \Cont(\S^3, \xis)$ is a weak homotopy equivalence. Because $\Cont(\S^3, \xis)$ has the homotopy type of a CW--complex, $i$ is a genuine homotopy equivalence. \qed \\

{\bf Proof of Corollary \ref{cor:3SPH}:} The homomorphism $i\colon\mathrm{U}(2) \longrightarrow \Cont(\S^3, \xis)$ factors through the inclusion of the subgroup $\SCont_{0}(\S^{3}, \alpha_{\text{st}})$ as on the following diagram.
\begin{center}
\begin{minipage}{0.5\linewidth}
\xymatrix{ & \Cont(\S^{3}, \xis ) \\ \mathrm{U}(2)  \ar[r]_<<<<{j} \ar[ur]^{i} & \SCont_{0}(\S^{3}, \alpha_{\text{st}}) \ar[u]_{k}
}
\end{minipage}
\end{center}

The map $i$ is a homotopy equivalence by Theorem \ref{thm:3SPH}, while $j$ is a homotopy equivalence by the argument of Corollary \ref{cor:5SPH}. It follows that $k$ induces an isomorphism on the homotopy groups, i.e.~it is a weak homotopy equivalence. Since the spaces involved have the homotopy type of CW--complexes, $k$ is a genuine homotopy equivalence.\qed\\

The proofs of Theorem \ref{thm:3SPH} and Corollary \ref{cor:3SPH} close the first part of the paper. In the second part we aim to use the techniques of Chern--Weil theory to study the group of strict contact transformations.


\section{Chern--Weil theory}\label{sec:chernweil}

In this section we review the basics of Chern--Weil theory keeping in mind the case of infinite dimensional Lie groups such as $\SCont(M, \a)$. First, in Subsection \ref{ssec:BG} the notions of a classifying space and characteristic classes are briefly recalled. Subsection \ref{ssec:chernweil} then details the constructions of Chern--Weil theory and Subsection \ref{ssec:infgeom} comments on some nuances of the infinite dimensional case.

\subsection{Classifying spaces} \label{ssec:BG} Let $G$ be a topological group. The first important notion we need is the content of the following definition:

\begin{definition}\label{def:uni}
A \emph{universal principal $G$--bundle} is a numerable\footnote{Numerability is a standard condition on fibre bundles, but unfortunately it is not mentioned frequently. A covering $(U_{i})_{i \in I}$ of a topological space $B$ is called \emph{numerable} if there exists a locally finite partition of unity $(\lambda_{i})_{i \in I}$ such that $\mathrm{supp}(\lambda_{i}) \subseteq U_{i}$ for all $i\in I$. A fibre bundle $P\longrightarrow B$ is then called \emph{numerable} if it is locally trivial over a numerable covering of the base $B$. For example, a locally trivial fibre bundle over a paracompact space is numerable.} principal $G$--bundle $EG\longrightarrow BG$ such that for any other numerable principal $G$--bundle $P\longrightarrow B$ there exists, up to a $G$--bundle homotopy, a unique $G$-bundle morphism $\Phi\colon P \longrightarrow EG$. The base space $BG$ is called a \emph{classifying space} of $G$.
\end{definition}

We shall always suppose that principal bundles are numerable. A principal $G$--bundle $P\longrightarrow B$ is then universal if and only if the total space $P$ is contractible as a topological space. The existence of a universal bundle for any topological group $G$ was proved by J.\,Milnor \cite{M1}. He also showed that if the base space $BG$ has the homotopy type of a countable CW--complex, then this homotopy type is unique. In particular, the cohomology ring $H^{*}(BG)$ is independent of the choice of a model for $BG$.\\

Let us furthermore define the notion of a characteristic class of principal $G$--bundles.

\begin{definition}
Let $R$ be a commutative ring. An $R$--valued \emph{characteristic class} on the category $\mathfrak{C}$ of (numerable) principal $G$--bundles is a map
$$\chi \colon \mathfrak{C} \ni (P\longrightarrow B) \longmapsto \chi(P) \in H^{*}(B; R)$$
which is natural with respect to $G$--bundle morphisms, i.e.~for a $G$--bundle morphism 
\begin{center}
\begin{minipage}{0.5\linewidth}
\xymatrix{ P_{1} \ar[r]^{\Phi} \ar[d] & P_{2} \ar[d] \\ B_{1} \ar[r]^{\phi} & B_{2}}
\end{minipage}
\end{center}
and the induced map $\phi^{*}\colon H^{*}(B_{2}; R) \to H^{*}(B_{1}; R)$ we have $\chi(P_{1}) = \phi^{*} \chi(P_{2})$.
\end{definition}

According to Definition \ref{def:uni} for a given a principal $G$--bundle $P\longrightarrow B$ there exists a $G$--bundle morphism
\begin{center}
\begin{minipage}{0.5\linewidth}
\xymatrix{ P \ar[r]^{\Phi} \ar[d] & EG \ar[d] \\ B \ar[r]^{\phi} & BG}
\end{minipage}
\end{center}
The base map $\phi$ of this morphism is called the \emph{classifying map} of $P$ and it is uniquely defined up to homotopy. By the naturality property of characteristic classes we then have $\chi(P)= \phi^{*}\chi(EG)$ and so it follows that $\chi$ is uniquely determined by its value on the universal bundle $EG\longrightarrow BG$. On the other hand, we can fix $\chi_{0} \in H^{*}(BG; R)$ and prescribe $\chi(EG)=\chi_0$, thus defining a characteristic class by $\chi(P) := \phi^{*}\chi_{0}$. Therefore the set, hereafter the ring, of characteristic classes of principal $G$--bundles is isomorphic to the cohomology ring $H^{*}(BG; R)$ of a classifying space $BG$ of the group $G$.\\

Chern--Weil theory is a classical differential geometry framework for construction and realization of such characteristic classes.


\subsection{Chern--Weil theory} \label{ssec:chernweil}
Let $G$ be a Lie group with Lie algebra $\mathfrak{g}$ and a let $P\longrightarrow B$ be a smooth principal $G$--bundle. Denote by $r_{a}$ the right action of $a\in G$ on $P$ and consider a principal connection form $\sigma\colon TP\longrightarrow \mathfrak{g}$ with curvature form $K^{\sigma}\colon TP \times_{P} TP\longrightarrow\mathfrak{g}$. The curvature form is a $\lieg$--valued two--form on $P$ which is $G$--equivariant, i.e.~$(r_{a})^{*}K^{\s} = \mathrm{Ad}_{a^{-1}}\circ K^{\s}$ for all $a\in G$.\\

Moreover, fix a symmetric multilinear form $r\colon \lieg \times \ldots \times \lieg\longrightarrow \mathbb{R}$ on $\lieg$ which is $\mathrm{Ad}_{G}$--invariant, i.e.
$$ r(\mathrm{Ad}_{a}A_{1}, \ldots, \mathrm{Ad}_{a}A_{k}) = r(A_{1}, \ldots, A_{k}) \quad \text{for all} \quad a\in G, A_{1}, \ldots, A_{k} \in \lieg.$$

Such an $r$ is called an \emph{invariant polynomial} on $\lieg$. Now consider the composition

$$\chwup\colon (TP \times_{P} TP) \times_{P} \ldots \times_{P} (TP\times_{P} TP) \xrightarrow{\curv \times \ldots \times \curv} \lieg\times\ldots\times\lieg \stackrel{r}{\longrightarrow} \mathbb{R}.$$

This is a $2k$--form on $P$ which, due to the $G$--equivariance of $K^{\sigma}$, the invariance of $r$ and the Bianchi identity for $K^{\sigma}$, descends to a closed $2k$--form $\underline{\chi}_r$ on the base $B$. Its de\,Rham cohomology class $$\chi_{r}(P) := [\underline{\chi}_r] \in H^{2k}_{\mathrm{deR}}(B)$$ is independent of the choice of the connection form $\sigma$. It follows that the assignment
$$(P\longrightarrow B) \longmapsto \chi_{r}(P)$$
defines a characteristic class for smooth principal $G$--bundles.\\

This construction of characteristic classes is referred to as \emph{Chern--Weil theory}, see e.g.~ \cite{Du} for a detailed description. J.\,L.\,Dupont also shows that the construction can be extended to the universal principal $G$--bundle $EG \longrightarrow BG$ yielding a universal characteristic class $\chi_{r}\in H^{2k}(BG; \mathbb{R})$. In fact if $G$ is a compact Lie group, we obtain all characteristic classes in this way. Denote by $I^{*}(G)$ the set of all invariant polynomials on $\lieg$, which forms a ring under the addition and multiplication (composed with symmetrization) of polynomials. Then we have the following important result.

\begin{theorem}[H.\,Cartan] \label{thm:cartan}
For a compact Lie group $G$ the map $\chi\colon I^{*}(G)\longrightarrow H^{*}(BG; \mathbb{R})$ defined by $r\longmapsto \chi_{r}(EG)$ is an isomorphism of rings.   
\end{theorem}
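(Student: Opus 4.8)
The plan is to prove the isomorphism by reducing to the case of a maximal torus, where the Chern--Weil map can be computed by hand, and then to match the images of the restriction maps on the two sides using classical invariant-theoretic and topological results. Before doing so I would record the easy structural facts. That $\chi$ is a ring homomorphism follows from the multiplicativity of the construction in Subsection~\ref{ssec:chernweil}: the form representing a product of invariant polynomials is, up to the symmetrization implicit in the product on $I^{*}(G)$, the wedge of the forms representing the factors (the curvature $\curv$ being an even form), so $\chi$ carries the product on $I^{*}(G)$ to the cup product on $H^{*}(BG; \R)$. I would also reduce to $G$ connected: writing $G_{0}$ for the identity component, the finite group $\pi_{0}(G)$ acts on both $I^{*}(G_{0})$ and $H^{*}(BG_{0}; \R)$, and since we use $\R$--coefficients a transfer argument gives $I^{*}(G) = I^{*}(G_{0})^{\pi_{0}(G)}$ and $H^{*}(BG; \R) = H^{*}(BG_{0}; \R)^{\pi_{0}(G)}$ compatibly with $\chi$; hence it suffices to treat the connected case.

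Fix a maximal torus $T \subseteq G$ with Lie algebra $\mathfrak{t}$ and Weyl group $W$. Naturality of the Chern--Weil construction with respect to the inclusion $\iota\colon T \hookrightarrow G$ yields a commutative square relating $\chi$ and $\chi_{T}$ through the restriction map $I^{*}(G) \to I^{*}(T)$ of invariant polynomials and the map $(B\iota)^{*}\colon H^{*}(BG; \R) \to H^{*}(BT; \R)$. The torus furnishes the base case. Since $T \cong (\S^{1})^{k}$ is abelian, the adjoint action is trivial, so $I^{*}(T) = \mathrm{Sym}(\mathfrak{t}^{*})$ is a polynomial ring on $k$ generators in degree $2$; on the other hand $BT \simeq (\C\P^{\infty})^{k}$, so $H^{*}(BT; \R) = \R[x_{1}, \ldots, x_{k}]$ with $|x_{i}| = 2$. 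The map $\chi_{T}$ sends the coordinate functionals to the Euler classes $x_{i}$ of the tautological $\S^{1}$--bundles over the factors, which are exactly the polynomial generators; thus $\chi_{T}$ is a ring isomorphism (this is the standard computation $H^{*}(B\S^{1}; \R) \cong \R[e]$ applied to each factor).

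It remains to identify the two images inside $H^{*}(BT; \R) = I^{*}(T)$ and to check they correspond under $\chi_{T}$. On the algebraic side the Chevalley restriction theorem says $I^{*}(G) \to I^{*}(T)$ is injective with image the Weyl invariants $\mathrm{Sym}(\mathfrak{t}^{*})^{W}$. On the topological side Borel's theorem---obtained from the fibration $G/T \to BT \to BG$, whose fibre has nonzero Euler characteristic---says $(B\iota)^{*}$ is injective with image $H^{*}(BT; \R)^{W}$. As the $W$--actions on the two sides agree, $\chi_{T}$ is $W$--equivariant and so restricts to an isomorphism $\mathrm{Sym}(\mathfrak{t}^{*})^{W} \xrightarrow{\ \sim\ } H^{*}(BT; \R)^{W}$. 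A diagram chase in the commutative square, using injectivity of both vertical restriction maps, then forces $\chi$ to be an isomorphism onto $H^{*}(BG; \R)$.

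The main obstacle is that the argument rests on two substantial structural inputs---Chevalley's description of $I^{*}(G)$ as Weyl invariants and Borel's computation of $H^{*}(BG; \R)$---together with the verification that the $W$--actions on $\mathrm{Sym}(\mathfrak{t}^{*})$ and on $H^{*}(BT; \R)$ match under $\chi_{T}$. Connectedness of $G$ enters precisely at this point, which is why the preliminary reduction to the identity component is needed. Everything else---the ring-homomorphism property, the naturality of the square, and the torus computation---is comparatively routine.
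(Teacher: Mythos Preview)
Your outline is a correct sketch of the standard proof of Cartan's theorem: reduce to the connected case, verify the torus case directly, and then use the Chevalley restriction theorem on the algebraic side together with Borel's computation of $H^{*}(BG;\R)$ on the topological side to identify both with the Weyl invariants in $H^{*}(BT;\R)$. The only point requiring care, as you note, is checking that the $W$--actions on $\mathrm{Sym}(\mathfrak{t}^{*})$ and on $H^{*}(BT;\R)$ correspond under $\chi_{T}$; this follows from naturality of the Chern--Weil construction applied to the action of $N(T)/T$ on $T$ by conjugation.

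However, the paper does not actually prove this theorem. It is stated as a classical result attributed to H.~Cartan and is used as a black box (the reference given for Chern--Weil theory is Dupont's lecture notes \cite{Du}). So there is no comparison to make: your proposal supplies an argument where the paper simply quotes the result.
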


\begin{example} \label{ex:torus}
Let $G = \mathbb{T}^{n}$ be the $n$-dimensional torus. Because the adjoint action on its Lie algebra $\mathfrak{t} \cong \mathbb{R}^{n}$ is trivial, any polynomial $r$ in $n$ variables determines a characteristic class $\chi_{r}$ of principal $\T^{n}$--bundles. By Cartan's theorem $H^{*}(B\mathbb{T}^{n}; \mathbb{R}) \cong \mathbb{R}[t_{1}, \ldots, t_{n}].$\\

In the case $G=S^{1}$ the generator $t_{1}$ is just the identity map $\text{id}_{\mathbb{R}}$. Then for a principal $S^{1}$--bundle with a principal connection form $\sigma\colon TP\longrightarrow \mathbb{R} \cong \mathrm{Lie}\,S^{1}$ the induced Chern--Weil two--form $\overline{\chi}_{t_{1}}$ equals the curvature form $K^{\sigma}$. In particular, $\chi_{t_{1}}(P) \in H^{2}_{\text{deR}}(B)$ is represented by the projection of $K^{\sigma}$ to the base. The characteristic class $\chi_{t_{1}}(P)$ is called the (real) \emph{Euler class} of $P$.
\end{example}


\subsection{Infinite dimensional geometry} \label{ssec:infgeom}

The machinery of Chern--Weil theory can be extended to a preferred setting of infinite dimensional differential geometry -- either to the setting of Fr\'{e}chet manifolds or to that of convenient calculus of Fr\"{o}licher, Kriegl and Michor \cite{KrMi} (for a rapid introduction to the basic notions of convenient calculus we also suggest \cite{Le}). For technical reasons we prefer the framework of convenient calculus, for the present paper the choice is not essential though. Various groups of diffeomorphisms of smooth finite dimensional manifolds can be endowed with the structure of a convenient Lie group, e.g.~the groups of contact transformations $\Cont(M, \xi)$ (\cite[Theorem 43.19]{KrMi}) and $\SCont(M, \a)$ (\cite{By} or \cite{Sp}) of a closed contact manifold $(M, \xi)$. It thus makes sense to study their Chern--Weil theory.\\

Some major differences between the convenient calculus and finite dimensional geometry arise from topological complexity of the modeling topological vector spaces. For example, there are many possible definitions of differential forms, but fortunately only one seems to be 'correct' in the sense that all the classical operations such as pullback, exterior differentiation and Lie derivative preserve this class of differential forms. Furthermore, the de\,Rham theorem for convenient manifolds is recovered provided that the underlying topology of the smooth manifold is paracompact.\\

The Chern--Weil construction of characteristic classes then works as described above under the additional assumption that the invariant polynomial $r$ is a bounded map with respect to the natural topology on the Lie algebra $\lieg$. J.\,L.\,Dupont's method of extending Chern--Weil theory to the universal bundles is also easily adapted.\\

In the next section we focus on the Chern--Weil theory of the group of strict contact transformations of a closed contact manifold.


\section{Applications to $\SCont(M, \alpha)$}\label{sec:CWCont}

It seems in general difficult to find invariant polynomials on the Lie algebra of a convenient Lie group, in particular the group of diffeomorphisms of a smooth manifold or its subgroups. However, for a closed symplectic manifold a series of invariant polynomials on the Lie algebra of its group of Hamiltonian diffeomorphisms has been discovered and studied by A.\,G.\,Reznikov \cite{Re}. In this section we define analogous invariant polynomials for the groups of strict contactomorphisms.

\subsection{Reznikov classes for $\SCont(M, \a)$} \label{ssec:reznikov}

Let $\cmfd$ be a closed connected contact manifold. Recall from Section \ref{sec:preliminaries} that the Lie algebra $\mathcal{L} := \mathrm{Lie}\SCont(M, \alpha)$ of the group $\SCont(M, \a)$ of strict contactomorphisms of $M$ is the algebra of strictly contact vector fields on $M$. Using the contact form $\a$ it is identified with the algebra $C^{\infty}(M)^{R}$ of smooth functions on $M$ which are invariant under the Reeb flow of $\alpha$. The adjoint action of $f\in \SCont(M, \a)$ on $H\in \mathcal{L}$ is then given by $\mathrm{Ad}_{f}(H) = H\circ f^{-1}$. \\

For $k\in\mathbb{N}$ consider the map $I_{k}\colon \mathcal{L} \times \ldots \times \mathcal{L}\longrightarrow\mathbb{R}$ defined by
\begin{equation}\label{eq:pols}
I_{k}(H_1, \ldots, H_{k}) = \int_{M} H_{1}\cdot \ldots \cdot H_{k}\,\alpha\wedge(d\alpha)^{n}, \quad \text{where} \quad H_{1}, \ldots, H_{k} \in C^{\infty}(M)^{R}. \end{equation}

Because any $f\in \SCont(M,\a)$ preserves $\alpha$, it also preserves the volume form $\alpha\wedge(d\alpha)^{n}$ and so the map $I_{k}$ is $\mathrm{Ad}$--invariant. Moreover, since integration over a closed manifold is a bounded operator,\footnote{This holds in the convenient topology on $\mathcal{L}$, which differs from the smooth compact-open topology. However, the two topologies share the same bounded sets \cite{KrMi,Sp}.} $I_{k}$ is an invariant polynomial on $\mathcal{L}$.\\

Observe that $I_{k}$ is not invariant under the adjoint action of the larger group $\Cont(M, \xi)$ of contactomorphisms of $(M, \xi)$. This is the reason why we restrict to the group of strict contactomorphisms. We define the contact Reznikov classes as follows.

\begin{definition}
Let $\cmfd$ be a closed connected contact manifold and $k\in \mathbb{N}$. The $k$--th \emph{Reznikov class} $\chi_{k} \in H^{2k}(B\mathrm{Cont}(M, \alpha); \mathbb{R})$ is the characteristic class of principal $\SCont(M, \a)$--bundles determined by the invariant polynomial $I_{k}$.
\end{definition}

Note however that we do not know yet whether the Reznikov classes are non-trivial. We will see below that in various cases of contact manifolds admitting an action of a compact Lie group at least some of the Reznikov classes are indeed non-trivial. Let us briefly describe the strategy to show this.\\

Let $\varphi\colon G_{1}\longrightarrow G_{2}$ be a homomorphism of Lie groups and let $\widehat{\varphi}\colon \mathfrak{g}_{1} \to \mathfrak{g}_{2}$ denote the induced homomorphism of Lie algebras. Suppose that we have a bundle morphism of smooth principal $G_1$ and $G_2$--bundles $\Phi\colon P_{1}\longrightarrow P_{2}$ represented by $\varphi$, i.e.~such that $\Phi(p\cdot a) = \Phi(p)\cdot \varphi(a)$ for all $p\in P_{1}$ and $a\in G_{1}$. Suppose also that the bundle $P_{j}$ is equipped with a principal $G_{j}$--connection $\sigma_{j}\in\Omega^1(P_j,\mathfrak{g}_{j})$, for $j=1,2$, such that $\Phi^{*}\sigma_{2} = \widehat{\varphi} \circ \sigma_{1}$. It is straightforward to check that the curvature forms then satisfy the analogous relation $\Phi^{*}K^{\sigma_{2}} = \widehat{\varphi} \circ K^{\sigma_{1}}$.\\


Given an invariant polynomial $r\colon \mathfrak{g}_{2}\times \ldots \times \mathfrak{g}_{2}\longrightarrow \mathbb{R}$ on $\mathfrak{g}_{2}$, we consider its pullback $$\widehat{\varphi}^{*}r \colon \mathfrak{g}_{1} \times \ldots \times \mathfrak{g}_{1} \xrightarrow{\widehat{\varphi}\times \ldots \times \widehat{\varphi}} \mathfrak{g}_{2} \times \ldots \times \mathfrak{g}_{2} \stackrel{r}{\longrightarrow} \mathbb{R},$$ which is an invariant polynomial on $\mathfrak{g}_{1}$. Let us denote by $\overline{\chi}_{r} \in \Omega^{*}(P_{2})$ the Chern--Weil form constructed using the connection $\sigma_{2}$ and let $\overline{\chi}_{\widehat{\varphi}^{*}r} \in \Omega^{*}(P_{1})$ denote the Chern--Weil form constructed using the connection $\sigma_{1}$. It follows from the assumptions above that $\overline{\chi}_{\widehat{\varphi}^{*}r} = \Phi^{*}\overline{\chi}_{r}$ and consequently $$\chi_{\widehat{\varphi}^{*}r}(P_1) = \phi^{*} \chi_{r}(P_{2}) \in H^{*}_{\text{deR}}(B_{1}),$$ where $\phi\colon B_{1}\longrightarrow B_{2}$ denotes the base map of the bundle morphism $\Phi$. In particular, $\chi_{r}(P_{2}) \neq 0$ whenever $\chi_{\widehat{\varphi}^{*}r}(P_1) \neq 0$.\\

To sum up, if there exists a principal $G_{1}$--bundle $P_{1}$ with a bundle morphism $\Phi$ to $P_{2}$ such that the pullback under $\Phi$ of the characteristic class $\chi_{r}(P_{2})$ to $P_{1}$ is a non-trivial characteristic class of $P_{1}$, then $\chi_{r}(P_{2})$ is non-trivial and so the universal characteristic class $\chi_{r} \in H^{*}(BG_{2}; \mathbb{R})$ is also non-trivial.\\ 

Let us focus back on the Reznikov classes. Let $\cmfd$ be a closed connected coorientable contact manifold with a smooth action of a compact Lie group $G$ and assume that $\alpha$ is $G$--invariant (such an $\alpha$ can be obtained by averaging, see \cite[Proposition 2.8]{Ler1}). Denote by $\varphi\colon G\longrightarrow \SCont(M, \a)$ the corresponding Lie group homomorphism. Consider the extension by $\varphi$ of the universal principal $G$--bundle $EG\longrightarrow BG$, i.e.~the associated principal $\SCont(M, \a)$--bundle $$q\colon P = P\times_{\varphi} \SCont(M, \a)\longrightarrow BG$$ together with the bundle morphism $\Phi\colon EG\longrightarrow P$ defined by $\Phi(p) = [p, \text{id}]$. Now any principal connection 1--form $\sigma$ on $EG$ can be extended to a principal connection form $\theta\in\Omega^1(P,\mathrm{Lie}\SCont(M, \a))$ such that $\Phi^{*}\theta = \widehat{\varphi}\circ \sigma$. Applying the above procedure to the invariant polynomial $I_{k}$ we obtain a characteristic class $\chi_{\widehat{\varphi}^{*}I_{k}} \in H^{2k}(BG; \mathbb{R})$. By Cartan's Theorem \ref{thm:cartan} this class is uniquely determined by the invariant polynomial $\widehat{\varphi}^{*}I_{k}$ and hence it is non-trivial if and only if the polynomial is non-zero. Therefore in order to conclude the non-triviality of the Reznikov class $\chi_{k}$, it is enough to show that the invariant polynomial $\widehat{\varphi}^{*}I_{k}$ is a nonzero polynomial on the Lie algebra $\lieg$.\\

Let us describe the invariant polynomial $\widehat{\varphi}^{*}I_{k}$ in terms of the $\a$--moment map of the $G$--action $\varphi$ on $(M, \xi=\ker\a)$. The \emph{$\alpha$--moment map} is the map $m\colon M\longrightarrow \mathfrak{g}^{*}$ uniquely defined by $$\langle m(x), A \rangle = \alpha_{x}(\vec{A}(x)),$$ where $x \in M$, $A\in \mathfrak{g}$ and $\vec{A}\in \mathfrak{X}(M)$ is the fundamental vector field on $M$ induced by $A$. We also write the moment map as $m\colon M \times \mathfrak{g} \longrightarrow \mathbb{R}$.  Then for $A_{1}, \ldots, A_{k} \in \lieg$ we have
\begin{align*} (\widehat{\varphi}^{*}I_{k})(A_{1}, \ldots, A_{k}) = I_{k}\bigl(\widehat{\varphi}(A_{1}), \ldots, \widehat{\varphi}(A_{k}) \bigr) &=  I_{k}\bigl(\a(\vec{A}_{1}), \ldots, \a(\vec{A}_{k}) \bigr) = \\ &= \int_{M} m(-, A_{1})\cdot\ldots\cdot m(-, A_{k})\, \alpha\wedge(d\alpha)^{n}.
\end{align*}

In this situation we can prove that the even Reznikov classes are non-trivial, which is Proposition \ref{prop:evenRez}.\\

{\bf Proof of Proposition \ref{prop:evenRez}:}
Because the action $\varphi\colon G\longrightarrow \mathrm{Cont}(M, \alpha)$ is non-trivial by assumption, there exists $A\in \mathfrak{g}$ such that $\vec{A} \not\equiv 0$. In particular, $m(-, A) = \alpha(\vec{A}) \not\equiv 0$ since a contact vector field which lies completely in the contact distribution is necessarily the zero vector field. Consequently,
$$(\widehat{\varphi}^{*}I_{2l})(A, \ldots, A) = \int_{M} (m(-, A))^{2l}\,\alpha\wedge(d\alpha)^{n} >0$$
and $\widehat{\varphi}^{*}I_{2l}$ is a non-zero invariant polynomial on $\mathfrak{g}$. By Cartan's Theorem \ref{thm:cartan} the corresponding characteristic class is a non-trivial element of $H^{4l}(BG; \mathbb{R})$. Therefore so is the Reznikov class $\chi_{2l}$. \qed \\


For a special class of contact manifolds we can prove that also the odd Reznikov classes $\chi_{2l+1}$ are non-trivial. A contact manifold $(M, \xi)$ is called \emph{$K$--contact} if there exists a contact form $\alpha$ defining $\xi$ and a Riemannian metric $g$ such that the Reeb vector field $R$ is Killing with respect to $g$, i.e.~the Reeb flow of $\alpha$ is by isometries of $g$. The form $\alpha$ is then called $K$--contact too. An equivalent description of $K$--contact manifolds reads as follows.

\begin{proposition}[{\cite[Proposition 4.3]{Ler2}}]\label{prop:KCont} A closed connected contact manifold $(M, \xi)$ is $K$--contact if and only if $M$ admits an action of a torus $T$ by coorientation preserving contactomorphisms and there exists a vector $A\in \mathrm{Lie}\,T$ such that $m(-, A)\colon M\longrightarrow \mathbb{R}$ is a strictly positive function, where $m$ denotes the $\beta$--moment map of the $T$--action for any $T$--invariant contact form $\beta$.
\end{proposition}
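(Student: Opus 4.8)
The plan is to prove the two implications separately, using in both directions the elementary fact that the Reeb flow of any contact form is automatically by strict contactomorphisms, since $\mathcal{L}_{R}\alpha = i_{R}d\alpha + d(i_{R}\alpha) = 0$. Thus the Reeb flow always lies in $\SCont(M, \alpha)$, and $K$--contactness precisely adds the requirement that it also lie in the isometry group of some metric.

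For the forward implication, suppose $\alpha$ defines $\xi$ with Reeb field $R$ that is Killing for a metric $g$. First I would invoke the Myers--Steenrod theorem: since $M$ is closed, $\mathrm{Isom}(M, g)$ is a compact Lie group, and the Reeb flow $\{\mathrm{Fl}_{t}^{R}\}$ is a one--parameter subgroup of it. I would then set $T := \overline{\{\mathrm{Fl}_{t}^{R}\}}$; being a closed connected abelian subgroup of a compact Lie group, $T$ is a torus. The decisive point is that $T$ still preserves $\alpha$: the set of diffeomorphisms fixing $\alpha$, namely $\SCont(M, \alpha)$, is closed in $\Diff(M)$ and contains the Reeb flow, hence contains its closure $T$. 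Therefore $T$ acts by coorientation preserving (in fact strict) contactomorphisms and $\alpha$ is $T$--invariant. Letting $A\in \mathrm{Lie}\,T$ be the generator of the Reeb flow, so that $\vec{A} = R$, the $\alpha$--moment map satisfies $m(-, A) = \alpha(\vec{A}) = \alpha(R) \equiv 1 > 0$.

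For the converse, suppose $T$ acts by coorientation preserving contactomorphisms with a $T$--invariant form $\beta$, and $A\in\mathrm{Lie}\,T$ satisfies $h := m(-, A) = \beta(\vec{A}) > 0$. I would produce the desired contact form by rescaling: set $\alpha := \beta/h$. As $h$ is a positive $T$--invariant function, $\alpha$ defines the same cooriented $\xi$, is $T$--invariant, and satisfies $\alpha(\vec{A}) \equiv 1$. A short computation identifies $\vec{A}$ as the Reeb field of $\alpha$: from $\mathcal{L}_{\vec{A}}\beta = 0$ one gets $i_{\vec{A}}d\beta = -dh$, and from $T$--invariance of $h$ one gets $\vec{A}h = 0$, so expanding $i_{\vec{A}}d\alpha = i_{\vec{A}}\bigl((1/h)d\beta - (1/h^{2})\,dh\wedge\beta\bigr)$ gives $i_{\vec{A}}d\alpha = 0$. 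Finally, averaging any Riemannian metric over the compact torus $T$ yields a $T$--invariant metric $g$; since the Reeb flow of $\alpha$ is the flow of $\vec{A}$ and lies in $T$, it acts by isometries of $g$, so $R = \vec{A}$ is Killing and $(M, \xi)$ is $K$--contact.

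The independence of the positivity from the choice of $T$--invariant $\beta$ follows since any two coorientation--compatible such forms differ by a positive $T$--invariant factor $f$, under which $m(-, A)$ simply scales by $f$. I expect the main obstacle to reside in the forward implication, specifically in rigorously justifying that the closure of the Reeb flow in $\mathrm{Isom}(M, g)$ is a torus that continues to fix $\alpha$; this rests squarely on the compactness of the isometry group and on the closedness of the strict--contact condition, which are the two structural facts driving the whole argument.
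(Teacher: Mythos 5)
Your argument is correct, and it is essentially the standard proof of this statement: the paper itself gives no proof but simply quotes it from Lerman's \emph{Contact fiber bundles}, whose argument proceeds exactly as yours does (closing up the Reeb flow inside the compact isometry group for the forward direction, and rescaling $\beta$ by $1/m(-,A)$ plus averaging a metric over $T$ for the converse). The two points you flag as delicate --- that the closure of the Reeb flow is a torus still fixing $\alpha$, and that positivity of $m(-,A)$ is independent of the choice of $T$--invariant form --- are handled correctly, so there is no gap to report.
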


Note that a $T$--invariant contact form always exists and then the $K$--contact form $\alpha$ is a multiple of $\beta$ by a $T$--invariant function, hence in particular $\alpha$ is also $T$-invariant. The proof of Proposition \ref{prop:evenRez} together with Proposition \ref{prop:KCont} imply Proposition \ref{prop:oddRez}.\\


Propositions \ref{prop:evenRez} and \ref{prop:oddRez} are rather general results on non-triviality of the Reznikov classes. However, if we are given a specific circle or torus action, more information can be recovered. This is illustrated in the following subsection.


\subsection{Explicit computations} \label{ssec:reznikov_calc} In the previous subsection we studied non-triviality of the Reznikov classes by pulling the back under under a $G$--action. In certain cases we can compute the pullbacks explicitly -- we need to understand the induced map $$ (B\varphi)^{*}\colon H^{*}(B\mathrm{SCont}(M, \a); \mathbb{R})\longrightarrow H^{*}(BG; \mathbb{R}) $$ since $\chi_{\widehat{\varphi}^*I_{k}} = (B\varphi)^{*}\chi_{k}$. The simplest case is that of contact manifolds admitting a free $\S^1$-action, which is the content of Theorem \ref{thm:reg_chlgy}.\\

{\bf Proof of Theorem \ref{thm:reg_chlgy}:} Fix $k\in \mathbb{N}$. We shall explicitly compute the invariant polynomial $\widehat{\varphi}^{*}I_{k}$. Let $R$ denote the Reeb vector field corresponding to the contact form $\a$. Then the homomorphism $\varphi$ is defined by $\varphi(s)(x) = \mathrm{Fl}^{R}_{s}(x)$ for $x \in M$ and the induced Lie algebra homomorphism $\widehat{\varphi}\colon \mathbb{R}\longrightarrow\mathrm{Lie}\,\SCont(M, \a) \cong C^{\infty}(M)^{R}$ reads as $$\widehat{\varphi}(t)(x) = \alpha_{x}\left(\left. \frac{\mathrm{d}}{\mathrm{d}s}\right|_{s=0} \mathrm{Fl}^{R}_{st}(x) \right) = \alpha_{x}(t\cdot R(x)) = t, $$ i.e.~$\widehat{\varphi}(t)$ is the constant function on $M$ with value $t$.\\

Therefore for $t\in \mathbb{R} = \mathrm{Lie}\,S^{1}$ we obtain $$(\widehat{\varphi}^{*}I_{k})(t, \ldots, t) = I_{k}(\widehat{\varphi}(t), \ldots, \widehat{\varphi}(t)) = \int_{M} t\cdot \ldots \cdot t\, \alpha\wedge(d\alpha)^{n} = t^{k}\cdot \mathrm{vol}_{\alpha}(M).$$  By Example \ref{ex:torus} this is a non-zero multiple of the invariant polynomial on $\mathrm{Lie}\,\S^{1}$ corresponding to the $k$-th power of the universal Euler class $e$. In other words, $(B\varphi)^{*}\chi_{k} = \mathrm{vol}_{\alpha}(M)\cdot e^{k}$, which proves the statement of the theorem. \qed \\

This precise description allows us to compute the order of the Reeb flow in the fundamental group of $\SCont(M,\alpha)$. Corollary \ref{cor:reg_htpy} states that the order is infinite.\\

{\bf Proof of Corollary \ref{cor:reg_htpy}:} Put $\mathcal{G} = \mathrm{SCont}_{0}(M, \alpha)$. Then $\pi_{0}(B\mathcal{G}) = \pi_{1}(B\mathcal{G}) = 0$ and $$\pi_{1}(\mathcal{G})\otimes \mathbb{R} \cong \pi_{2}(B\mathcal{G})\otimes \mathbb{R} \cong H_{2}(B\mathcal{G}; \mathbb{R})$$ by the Hurewicz theorem. Let $\varphi \colon \S^{1}\longrightarrow \mathcal{G}$ denote the homomorphism induced by the Reeb flow and consider the following commutative diagram, where all the horizontal arrows are isomorphisms.
\begin{center}
\begin{minipage}{0.8\linewidth}
\xymatrix{
H^{2}(B\mathcal{G}; \mathbb{R}) \ar[d]^{(B\varphi)^{*}} \ar[r] & \mathrm{Hom}(H_{2}(B\mathcal{G}; \mathbb{R}), \mathbb{R}) \ar[d]^{- \circ (B\varphi)_{*}} \ar[r] & \mathrm{Hom}(\pi_{1}(\mathcal{G})\otimes \mathbb{R}, \mathbb{R}) \ar[d]^{- \circ \varphi_{*}} \\
H^{2}(B\S^{1}; \mathbb{R}) \ar[r] & \mathrm{Hom}(H_{2}(B\S^{1}; \mathbb{R}), \mathbb{R}) \ar[r] & \mathrm{Hom}(\pi_{1}(\S^{1})\otimes \mathbb{R}, \mathbb{R}) \cong \mathbb{R}
}
\end{minipage}
\end{center} Let $c\in \pi_{1}(\S^{1}) \cong \mathbb{Z}$ be the positive generator and let $\gamma \in \mathrm{Hom}(\pi_{1}(\S^{1})\otimes \mathbb{R}, \mathbb{R})$ be the dual of $c$. By the proof of Theorem \ref{thm:reg_chlgy} the image of $(B\varphi)^{*}\chi_{1}$ under the bottom row of isomorphisms is a non-zero multiple of $\gamma$.\\

Now for any $k\in \mathbb{N}$ we have $\varphi_{*}(k\cdot c) = k\cdot [\text{Reeb}] \in \pi_{1}(\mathcal{G})$. But then $$\chi_{1}(k\cdot [\text{Reeb}]) = \chi_{1}(\varphi_{*}(k\cdot c )) = ((B\varphi)^{*}\chi_{1})(k\cdot c) = \text{const}\cdot \gamma(k\cdot c) = \text{const}\cdot k \neq 0.$$
In particular, $k \cdot [\text{Reeb}] \neq 0 \in \pi_{1}(\mathcal{G})$. \qed

\begin{remark} Corollary \ref{cor:reg_htpy} can be alternatively deduced from the homotopy invariance of  the Calabi--Weinstein functional on $\widetilde{\mathrm{Cont}}_{0}(M, \alpha)$. See \cite{Sh} and the references therein.
\end{remark}

Our next example is the case of a $\T^2$--action. Fix $n\in \mathbb{N}$ and consider the 3--torus $\mathbb{T}^{3} = \mathbb{R}^{3} / (2\pi\mathbb{Z})^{3}$ together with the contact form $\alpha_{n} = \cos(nt)dx - \sin(nt)dy$. This form is invariant under the $\T^{2}$--action defined by $(a,b)\cdot (x,y,t) = (x+a, y+b, t)$ and so we obtain a homomorphism $\varphi\colon \mathbb{T}^{2}\longrightarrow \mathrm{SCont}(\mathbb{T}^{3},\alpha_{n})$.

\begin{proposition}\label{prop:toric}
For $l\in \mathbb{N}$ the image of $\chi_{2l}$ under the induced map $$(B\varphi)^{*}\colon H^{*}(B\mathrm{SCont}(\mathbb{T}^{3}, \alpha_{n}); \mathbb{R}) \to H^{*}(B\mathbb{T}^{2}; \mathbb{R}) \cong \mathbb{R}[A, B]$$ is a nonzero multiple of $(A^{2}+B^{2})^{l}$ while the image of $\chi_{2l-1}$ is zero.
\end{proposition}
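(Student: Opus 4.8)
The plan is to compute the pullback polynomial $\widehat{\varphi}^{*}I_{k}$ explicitly and then read off the class via Cartan's Theorem \ref{thm:cartan} together with the identity $(B\varphi)^{*}\chi_{k} = \chi_{\widehat{\varphi}^{*}I_{k}}$ established in Subsection \ref{ssec:reznikov}. First I would record the basic data of the action. For $(p,q)\in \mathrm{Lie}\,\T^{2}\cong \mathbb{R}^{2}$ the fundamental vector field of the action $(a,b)\cdot(x,y,t)=(x+a,y+b,t)$ is $\vec{(p,q)} = p\,\partial_{x}+q\,\partial_{y}$, so the $\alpha_{n}$--moment map is
$$m(-,(p,q)) = \alpha_{n}(p\,\partial_{x}+q\,\partial_{y}) = p\cos(nt) - q\sin(nt).$$
A direct computation also yields $\alpha_{n}\wedge d\alpha_{n} = n\,dx\wedge dy\wedge dt$, which in passing confirms that $\alpha_{n}$ is a contact form.

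Substituting this into the formula for $\widehat{\varphi}^{*}I_{k}$ from Subsection \ref{ssec:reznikov} and performing the trivial integrations over $x$ and $y$, I would reduce the whole computation to a one--dimensional integral:
$$(\widehat{\varphi}^{*}I_{k})\bigl((p,q),\ldots,(p,q)\bigr) = (2\pi)^{2}\,n \int_{0}^{2\pi} \bigl(p\cos(nt) - q\sin(nt)\bigr)^{k}\,dt.$$
The key manipulation is the amplitude--phase rewriting $p\cos(nt) - q\sin(nt) = \sqrt{p^{2}+q^{2}}\,\cos(nt+\phi)$ for a phase $\phi=\phi(p,q)$, valid off the origin, after which the integrand reads $(p^{2}+q^{2})^{k/2}\cos^{k}(nt+\phi)$. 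Since $\cos^{k}$ is $2\pi$--periodic, integrating it over the $n$ full periods swept out as $t$ runs through $[0,2\pi]$ removes all dependence on $\phi$, leaving a constant multiple of $(p^{2}+q^{2})^{k/2}$; because $\widehat{\varphi}^{*}I_{k}$ is a genuine polynomial of degree $k$, the resulting identity extends over the origin by continuity.

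It then remains to invoke the elementary values of $\int_{0}^{2\pi}\cos^{k}u\,du$, which vanishes for odd $k$ and equals the strictly positive number $2\pi\binom{k}{k/2}/2^{k}$ for even $k$. Hence $\widehat{\varphi}^{*}I_{2l-1}$ is the zero polynomial while $\widehat{\varphi}^{*}I_{2l}$ is a strictly positive multiple of $(p^{2}+q^{2})^{l}$. Under the identification $H^{*}(B\T^{2};\mathbb{R})\cong \mathbb{R}[A,B]$ of Example \ref{ex:torus}, the degree--two generators $A,B$ are precisely the linear coordinates $p,q$ on $\mathrm{Lie}\,\T^{2}$, so $(B\varphi)^{*}\chi_{2l}$ is a nonzero multiple of $(A^{2}+B^{2})^{l}$ and $(B\varphi)^{*}\chi_{2l-1}=0$, as asserted.

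I do not expect a serious obstacle: the argument is a concrete instance of the general machinery already assembled. The only point that deserves a moment's care is the rotational invariance of the output polynomial, namely that the integral depends on $(p,q)$ solely through $p^{2}+q^{2}$; this is exactly what the amplitude--phase rewriting together with the periodicity over $n$ periods makes transparent, and it is what forces the answer to be a power of $A^{2}+B^{2}$ rather than a more general homogeneous polynomial.
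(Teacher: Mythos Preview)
Your proof is correct and follows essentially the same approach as the paper: both compute the moment map, reduce the integral over $\T^{3}$ to a one--dimensional integral in $t$, apply the amplitude--phase rewriting to factor out $(p^{2}+q^{2})^{k/2}$, and then use the vanishing (resp.\ positivity) of $\int_{0}^{2\pi}\cos^{k}\,dt$ for odd (resp.\ even) $k$. You are in fact slightly more careful than the paper in noting that the amplitude--phase identity is a priori only valid off the origin and extends by continuity, and in giving the explicit value of the even integral.
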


\begin{proof}
Consider the Lie algebra homomorphism $\widehat{\varphi}\colon \mathrm{Lie}\,\mathbb{T}^{2} \cong \mathbb{R}^{2} \to \mathrm{Lie}\,\mathrm{Cont}(\mathbb{T}^{3}, \alpha_{n}) \cong C^{\infty}(\mathbb{T}^{3})^{R}$ induced by $\varphi$. We shall compute the invariant polynomials $\widehat{\varphi}^{*}I_{k}$. First, for $(A, B) \in \mathbb{R}^{2}$ we have $$T_{(0,0)}\varphi(A, B)(x, y, t) = \left. \frac{\mathrm{d}}{\mathrm{d}\mathnormal{s}}\right|_{\mathnormal{s}=0} (x+s\cdot A, y+s\cdot B, t) = A\cdot \frac{\partial}{\partial x} +B\cdot \frac{\partial}{\partial y}$$ and so $$\widehat{\varphi}(A, B)(x,y,t) = (\alpha_{n})_{(x,y,t)}\left(A\cdot \frac{\partial}{\partial x} +B\cdot \frac{\partial}{\partial y}\right) = A\cdot \cos(nt) - B\cdot \sin(nt).$$

\noindent The volume form on $\mathbb{T}^{3}$ is given by $\alpha_{n} \wedge d\alpha_{n} = n\cdot dx\wedge dy\wedge dt$. Therefore, \begin{align*} (\widehat{\varphi}^{*}I_{k})((A, B), \ldots, (A,B)) = n \int_{\mathbb{T}^{3}} \bigl(A\cos(nt) &- B\sin(nt)\bigr)^{k}\,dx\wedge dy\wedge dt = \\ &= 4n\pi^{2} \int_{0}^{2\pi} \bigl(A\cos(nt) - B\sin(nt)\bigr)^{k}\,\mathrm{d}t.\end{align*}

Let $\tau \in [0, 2\pi)$ be such that $(\cos\tau, \sin\tau) = \frac{1}{\sqrt{A^{2}+B^{2}}}(A, B)$. Then we can rewrite the integral on the right-hand side as follows \begin{align*} \int_{0}^{2\pi} \bigl(A\cos(nt) - B\sin(nt)\bigr)^{k}\,\mathrm{d}t = (\sqrt{A^2+B^2})^{k} \int_{0}^{2\pi} \bigl(\cos(\tau)\cos(nt) - \sin(\tau)\sin(nt)\bigr)^{k}\,\mathrm{d}t = \\ = \left(\sqrt{A^2+B^2}\right)^{k} \int_{0}^{2\pi} \cos^{k}(nt+\tau)\,\mathrm{d}t  = \left(\sqrt{A^2+B^2}\right)^{k} \int_{0}^{2\pi} \cos^{k}(nt)\,\mathrm{d}t. \end{align*}

For $k=2l-1$ odd the last integral vanishes. But for $k=2l$ even
$c_{l} := \int_{0}^{2\pi} \cos^{2l}(nt)\,\mathrm{d}t >0$
because the integrand is an almost everywhere positive function. To sum up, $$(\widehat{\varphi}^{*}I_{k})((A, B), \ldots, (A, B)) = \begin{cases} 0 & \text{ if } k=2l-1, \\ 4n\pi^{2} c_{l}\cdot (A^{2}+B^{2})^{l} & \text{ if } k=2l \end{cases}$$ and the statement of the proposition follows.
\end{proof}

These computations are analogous to the results of A.\,G.\,Reznikov \cite[Section 1.4]{Re} for Hamiltonian characteristic classes. In particular, his calculation for the Fubini--Study symplectic form on $\C\P^{n}$ can be easily modified to prove the following proposition. This can be also generalized to the lens spaces $\S^{2n+1}/ \mathbb{Z}_{k}$ equipped with the standard contact form.

\begin{proposition}\label{prop:sphsur}
Let $\varphi\colon \mathrm{U}(n+1)\longrightarrow\mathrm{SCont}(\S^{2n+1}, \alpha_{\mathrm{st}})$ be the homomorphism given by the standard action of the unitary group on $\S^{2n+1} \subseteq \mathbb{C}^{n+1}$. Then the induced map $$(B\varphi)^{*}\colon H^{*}(B\mathrm{SCont}(\S^{2n+1}, \alpha_{\mathrm{st}}); \mathbb{R}) \to H^{*}(B\mathrm{U}(n+1);\mathbb{R})$$ is surjective.
\end{proposition}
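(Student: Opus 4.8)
The plan is to exploit Cartan's Theorem \ref{thm:cartan} together with the functoriality set up in Subsection \ref{ssec:reznikov}. Recall that $(B\varphi)^{*}$ is a homomorphism of graded rings and that $(B\varphi)^{*}\chi_{k} = \chi_{\widehat{\varphi}^{*}I_{k}}$, where under the isomorphism $H^{*}(B\mathrm{U}(n+1);\mathbb{R}) \cong I^{*}(\mathrm{U}(n+1))$ of Theorem \ref{thm:cartan} the class $\chi_{\widehat{\varphi}^{*}I_{k}}$ corresponds to the invariant polynomial $\widehat{\varphi}^{*}I_{k}$ on $\mathfrak{u}(n+1) = \mathrm{Lie}\,\mathrm{U}(n+1)$. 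Since $I^{*}(\mathrm{U}(n+1))$ is the ring of $\mathrm{Ad}$--invariant polynomials on $\mathfrak{u}(n+1)$, i.e.\ the ring of symmetric polynomials in the eigenvalues, it suffices to show that the polynomials $\widehat{\varphi}^{*}I_{k}$, $k\in\mathbb{N}$, generate this ring as a ring. Because $(B\varphi)^{*}$ is a ring homomorphism, surjectivity onto a generating set forces surjectivity onto all of $H^{*}(B\mathrm{U}(n+1);\mathbb{R})$.

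First I would compute the $\als$--moment map of the standard action. For $A\in\mathfrak{u}(n+1)$ the fundamental vector field is $\vec{A}(z) = Az$, so, up to a universal constant, $m(z,A) = \als(\vec{A}(z))$ equals $\mathrm{Im}\langle Az, z\rangle$. Writing $A = iH$ with $H$ Hermitian and using the $\mathrm{Ad}$--invariance of $\widehat{\varphi}^{*}I_{k}$ to diagonalize, $H = \mathrm{diag}(\lambda_{0},\ldots,\lambda_{n})$, one obtains $m(z,A) = \tfrac{1}{2}\sum_{j}\lambda_{j}|z_{j}|^{2}$. Thus, up to a nonzero constant,
\[
(\widehat{\varphi}^{*}I_{k})(A,\ldots,A) = \int_{\S^{2n+1}} \Bigl(\sum_{j}\lambda_{j}|z_{j}|^{2}\Bigr)^{k}\,\als\wedge(d\als)^{n}.
\]

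The core computation is to evaluate this integral. I would push the contact volume $\als\wedge(d\als)^{n}$ forward along $z\mapsto(|z_{0}|^{2},\ldots,|z_{n}|^{2})$ onto the standard simplex $\Delta^{n} = \{u_{j}\ge 0,\ \sum_{j}u_{j}=1\}$; by toric symmetry (equivalently, the Duistermaat--Heckman measure of the Hamiltonian $\T^{n+1}$--action, exactly as in Reznikov's computation for $(\C\P^{n},\omega_{\mathrm{FS}})$ in \cite[Section 1.4]{Re}) this pushforward is a nonzero constant multiple of Lebesgue measure on $\Delta^{n}$. The integral then reduces to $\int_{\Delta^{n}}(\sum_{j}\lambda_{j}u_{j})^{k}\,du$, and expanding by the multinomial theorem together with the Dirichlet moment formula $\int_{\Delta^{n}}\prod_{j}u_{j}^{\gamma_{j}}\,du = \prod_{j}\gamma_{j}!/(k+n)!$ collapses the multinomial coefficients, leaving a nonzero constant multiple of the complete homogeneous symmetric polynomial $h_{k}(\lambda_{0},\ldots,\lambda_{n})$. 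Hence $\widehat{\varphi}^{*}I_{k}$ is, up to a positive scalar, equal to $h_{k}$.

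Finally, it is a classical consequence of the fundamental theorem of symmetric functions (via Newton's identities) that $h_{1},\ldots,h_{n+1}$ are algebraically independent and generate the ring of symmetric polynomials in $n+1$ variables, i.e.\ all of $I^{*}(\mathrm{U}(n+1))$. Since the image of $(B\varphi)^{*}$ contains these generators and $(B\varphi)^{*}$ is a ring map, it is surjective, which proves the proposition. I expect the only genuine obstacle to be the explicit integral of the third step, namely confirming that the pushforward of the contact volume is a constant multiple of Lebesgue measure on the simplex and carrying out the Dirichlet bookkeeping; once the identity $\widehat{\varphi}^{*}I_{k} = \mathrm{const}\cdot h_{k}$ is in hand, the generation statement is entirely standard. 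The same argument adapts to the lens spaces $\S^{2n+1}/\mathbb{Z}_{k}$ by restricting to the subtorus commuting with the deck action.
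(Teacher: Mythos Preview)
Your proposal is correct and follows precisely the route the paper indicates: the paper does not spell out a proof but simply says Reznikov's computation for $(\C\P^{n},\omega_{\mathrm{FS}})$ in \cite[Section 1.4]{Re} ``can be easily modified,'' and your argument---diagonalize via $\mathrm{Ad}$--invariance, push the contact volume to Lebesgue measure on the simplex, reduce to the Dirichlet integral yielding $h_{k}$, and conclude by the generation of symmetric polynomials---is exactly that modification. The computation and the final generation step are both sound.
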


The reader might have noticed that apart from the $3$--torus all our examples were prequantizations of integral symplectic manifolds. It is therefore natural to ask how the contact Reznikov classes are related to the original Hamiltonian Reznikov classes. This is will be clarified in the next subsection.


\subsection{Relationship to the Hamiltonian characteristic classes} \label{ssec:reznikov_ham} In this last subsection we briefly discuss the relationship between the contact Reznikov classes defined in Subsection \ref{ssec:reznikov} and the Hamiltonian Reznikov classes introduced in \cite{Re} in the case of prequantization spaces. Let us note that the Hamiltonian Reznikov classes can be also defined topologically using the so-called \emph{coupling class}. Using this approach they were studied by Gal, K\k{e}dra, McDuff and Tralle see e.g.~\cite{KeMc, GKT}.\\

Let $p\colon (M, \alpha)\longrightarrow (B, \omega)$ be a prequantization of an integral symplectic manifold $(B,\o)$ as in Section \ref{sec:preliminaries} or Subsection \ref{ssec:5SPH}. In particular, $\a$ is a contact form on $M$ such that $d\a = p^{*}\o$ and the Reeb flow of $\a$ generates a free $\S^{1}$-action. By Proposition \ref{prop:Vizman} there exists a Lie group homomorphism $q\colon \SCont_{0}(M, \a)\longrightarrow\Ham(B, \o)$.\\

The Lie algebra of $\Ham(B, \omega)$ is the algebra of Hamiltonian vector fields on $B$ and it can be identified via $\omega$ with the algebra of smooth functions on $B$ normalized with respect to the volume form $\omega^{n}$:
$$\mathcal{K}:= \mathrm{Lie}\Ham(B, \omega) \cong \left\{ h \in C^{\infty}(B)\,\Bigl|\, \int_{B}h\,\omega^{n} = 0\right\}.$$

For each $k\in \mathbb{N}$ the map $J_{k}\colon \mathcal{K}\times \ldots \times \mathcal{K}\longrightarrow \R$, $J_{k}(h_{1}, \ldots, h_{k}) = \int_{B} h_{1}\cdot\ldots\cdot h_{k}\,\omega^{n}$, is an invariant polynomial on $\mathcal{K}$ and so by the Chern--Weil theory it defines a characteristic class $\mu_{k} \in H^{*}(B\mathrm{Ham}(B, \omega), \R)$. These classes $\{\mu_k\}$ are called the \emph{Hamiltonian Reznikov classes}. Note that $\mu_{1}$ is trivial due to the normalization condition on $\mathcal{K}$.\\

Recall that the Lie algebra $\mathcal{L} := \mathrm{Lie}\SCont_{0}(M, \a)$ is isomorphic to the algebra of smooth functions on $M$ which are invariant under the Reeb flow of $\alpha$. Because the Reeb flow of $\a$ is exactly the $\S^{1}$--fibre of $p$, every such function $H\in C^{\infty}(M)$ descends to a unique function $h\in C^{\infty}(B)$ determined by $H = h\circ p$. It can be easily verified that the Lie algebra homomorphism $\widehat{q}\colon \mathcal{L}\longrightarrow \mathcal{K}$ is then given by $$\widehat{q}(H) =  h - \frac{\int_{B}h\,\omega^{n}}{\int_{B}\omega^{n}},$$ i.e.~$h$ normalized with respect to the volume form $\o^{n}$.\\

We need to understand the invariant polynomials $\widehat{q}^{*}J_{k}$ on $\mathcal{L}$. First note that for $H\in \mathcal{L}$ integrating along the $\S^{1}$--fibre we obtain $$ \int_{M}H\,\a\wedge(d\a)^{n} = \int_{M}(h\circ p)\,\a\wedge (p^{*}\o)^{n} = 2\pi\cdot \int_{B}h\,\o^{n}.$$ Put $ c:= \left( 2\pi\cdot \int_{B}\o^{n}\right)^{-1}$. Then for any $k \geq 2$ and $H_{1}, \ldots, H_{k}\in \mathcal{L}$ we have \begin{align*} {\widehat{q}}^{*}&J_{k}(H_{1}, \ldots, H_{k}) =  \int_{B} \bigl(h_{1}-c\cdot I_{1}(H_{1})\bigr)\cdot \ldots \cdot \bigl(h_{k}-c\cdot I_{1}(H_{k})\bigr)\,\o^{n} = \\ &= \int_{B} h_{1}\cdot \ldots \cdot h_{k}\,\o^{n} + r_{k}(h_{1}, \ldots, h_{k}) = \frac{1}{2\pi}\cdot \int_{M} H_{1}\cdot \ldots \cdot H_{k}\,\a\wedge(d\a)^{n} + R_{k}(H_{1}, \ldots, H_{k}) = \\ &= \frac{1}{2\pi}\cdot I_{k}(H_{1}, \ldots, H_{k}) + R_{k}(H_{1}, \ldots, H_{k}). \end{align*} The rest $R_{k}$ is an invariant polynomial on $\mathcal{L}$ in $k$ variables and it is an algebraic combination of the 'lower order' invariant polynomials $I_{1}, \ldots, I_{k-1}$.\\

We are now ready to state the last result of the paper. 

\begin{proposition}
The induced map $(Bq)^{*}\colon H^{*}(B\mathrm{Ham}(B, \o );\mathbb{R})\longrightarrow H^{*}(B\mathrm{SCont}_{0}(M, \a );\mathbb{R})$ sends the subalgebra generated by the Hamiltonian Reznikov classes $\mu_{2}, \ldots, \mu_{k}$ injectively into the subalgebra generated by the contact Reznikov classes $\chi_{1}, \chi_{2}, \ldots, \chi_{k}$.
\end{proposition}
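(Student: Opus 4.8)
The plan is to translate the statement about the map $(Bq)^{*}$ on cohomology into a statement about the dual map $\widehat{q}^{*}$ on invariant polynomials, using the correspondence between characteristic classes and $\mathrm{Ad}$--invariant polynomials established via Chern--Weil theory and the naturality formula $\chi_{\widehat{\varphi}^{*}r}(P_{1}) = \phi^{*}\chi_{r}(P_{2})$ derived earlier. Concretely, I would first observe that the subalgebra generated by $\mu_{2}, \ldots, \mu_{k}$ corresponds under the Chern--Weil isomorphism to the subalgebra $R[J_{2}, \ldots, J_{k}]$ of invariant polynomials on $\mathcal{K}$, and that $(Bq)^{*}$ acts on these by pullback along $\widehat{q}$. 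Thus the statement reduces to showing that $\widehat{q}^{*}$ maps the subalgebra generated by $J_{2}, \ldots, J_{k}$ injectively into the subalgebra generated by $I_{1}, \ldots, I_{k}$ on $\mathcal{L}$.

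The containment in the target subalgebra is essentially the computation already carried out in the excerpt: the displayed identity
\[
\widehat{q}^{*}J_{k} = \frac{1}{2\pi}\, I_{k} + R_{k},
\]
where $R_{k}$ is a polynomial expression in $I_{1}, \ldots, I_{k-1}$, shows by induction on $k$ that each $\widehat{q}^{*}J_{k}$ lies in the subalgebra generated by $I_{1}, \ldots, I_{k}$. This also exhibits the key triangular structure: modulo lower--order generators, $\widehat{q}^{*}J_{k}$ agrees with a nonzero scalar multiple of $I_{k}$. I would record this triangularity carefully, since it is what drives the injectivity argument.

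The main obstacle, and the heart of the proof, is injectivity. For this I would set up a suitable filtration or grading on the polynomial algebras that makes the triangular relation usable. One clean approach is to assign to each $I_{j}$ (and to each $J_{j}$) a weight $j$ and argue on the associated graded level: the formula above shows that the associated graded map sends the class of $J_{k}$ to a nonzero multiple of the class of $I_{k}$, so the induced map on associated graded algebras sends the subalgebra generated by the classes of $J_{2}, \ldots, J_{k}$ onto the subalgebra generated by the classes of $I_{2}, \ldots, I_{k}$ by a scaling of generators, hence injectively. One then lifts injectivity from the associated graded to the original filtered algebra by the standard argument that a filtered map which is injective on the associated graded is itself injective. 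The delicate point to verify is that the generators $I_{2}, \ldots, I_{k}$ (equivalently $J_{2}, \ldots, J_{k}$) are algebraically independent over $\mathbb{R}$, so that the subalgebra they generate is a genuine polynomial ring and the associated graded argument does not collapse; I would justify this by exhibiting, as in Reznikov's and the torus computations above, explicit Hamiltonian group actions on which these polynomials restrict to algebraically independent invariant polynomials, for instance pulling back along a maximal torus action where they become the power--sum symmetric functions of the moment map coordinates.

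Finally, I would assemble these pieces: the Chern--Weil isomorphism identifies $(Bq)^{*}$ restricted to the relevant subalgebras with $\widehat{q}^{*}$ restricted to $\mathbb{R}[J_{2}, \ldots, J_{k}]$, the triangularity formula places the image inside $\mathbb{R}[\chi_{1}, \ldots, \chi_{k}]$ (the subalgebra generated by the contact Reznikov classes, which correspond to $I_{1}, \ldots, I_{k}$), and the associated--graded argument combined with algebraic independence yields injectivity, completing the proof.
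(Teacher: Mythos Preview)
Your overall architecture matches the paper's: the paper's entire proof is the commutative square
\[
\xymatrix{
I^{*}(\Ham(B,\o)) \ar[r]^{\widehat{q}^{*}} \ar[d] & I^{*}(\SCont_{0}(M,\a)) \ar[d] \\
H^{*}(B\Ham(B,\o);\R) \ar[r]^{(Bq)^{*}} & H^{*}(B\SCont_{0}(M,\a);\R)
}
\]
together with the triangular relation $\widehat{q}^{*}J_{k}=\tfrac{1}{2\pi}I_{k}+R_{k}$ already established in the text. The paper adds nothing beyond this; in particular it introduces no filtration, no associated--graded argument, and no discussion of algebraic independence of the $I_{j}$ or $J_{j}$.

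There is, however, a genuine gap in your reduction step. You twice invoke a ``Chern--Weil isomorphism'' to identify the subalgebra generated by $\mu_{2},\ldots,\mu_{k}$ in $H^{*}(B\Ham)$ with the polynomial algebra $\R[J_{2},\ldots,J_{k}]$, and likewise on the contact side. But Cartan's theorem (Theorem~\ref{thm:cartan} in the paper) is stated and valid only for \emph{compact} Lie groups; for the infinite--dimensional groups $\Ham(B,\o)$ and $\SCont_{0}(M,\a)$ the Chern--Weil map is merely a ring homomorphism, with no injectivity or surjectivity known in general. Consequently your reduction of the cohomological statement to the purely algebraic statement about $\widehat{q}^{*}$ on $\R[J_{2},\ldots,J_{k}]$ is not justified: even if your triangularity and associated--graded argument shows that $\widehat{q}^{*}$ is injective on polynomials, there could be relations among the $\mu_{j}$ in $H^{*}(B\Ham;\R)$ that do not lift to relations among the $J_{j}$, so injectivity at the polynomial level does not automatically descend. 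The containment of the image in the subalgebra generated by $\chi_{1},\ldots,\chi_{k}$ does follow from the diagram exactly as you (and the paper) argue; it is only the passage from polynomial injectivity to cohomological injectivity that your extra machinery does not actually bridge.
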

\begin{proof} The statement of the proposition follows from the discussion above and the following commutative diagram
\begin{center}
\begin{minipage}{0.8\linewidth}
\xymatrix{
I^{*}(\Ham(B, \o) ) \ar[d] \ar[r]^{(dq)^{*}} & I^{*}(\SCont_{0}(M, \a) ) \ar[d] \\
H^{*}(B\mathrm{Ham}(B, \o );\mathbb{R}) \ar[r]^{(Bq)^{*}} & H^{*}(B\mathrm{SCont}_{0}(M, \a );\mathbb{R})
}
\end{minipage}
\end{center}
where $I^{*}(-)$ denotes the ring of invariant polynomials and the vertical arrows are the Chern--Weil homomorphisms.
\end{proof}


\end{document}